\theoremstyle{plain}
\newtheorem{thm}{\protect\theoremname}
  \theoremstyle{definition}
  \theoremstyle{plain}
  \newtheorem{lem}[thm]{\protect\lemmaname}
  \newtheorem{cor}[thm]{\protect\corollaryname}
  \providecommand{\propname}{Proposition}
  \providecommand{\condname}{Condition}
  \providecommand{\probname}{Problem}
  \providecommand{\lemmaname}{Lemma}
  \providecommand{\corollaryname}{Corollary}
\providecommand{\theoremname}{Theorem}
\newtheorem{defn}{Definition}
\title{Scaling positive random matrices: concentration and asymptotic convergence}
\author{ Boris Landa\\
\small{Program in Applied Mathematics, Department of Mathematics, Yale University}\\
\small{boris.landa@yale.edu}
}
\begin{document}

\maketitle

\begin{abstract}
It is well known that any positive matrix can be scaled to have prescribed row and column sums by multiplying its rows and columns by certain positive scaling factors (which are unique up to a positive scalar). This procedure is known as matrix scaling, and has found numerous applications in operations research, economics, image processing, and machine learning. In this work, we investigate the behavior of the scaling factors and the resulting scaled matrix when the matrix to be scaled is random. Specifically, letting $\widetilde{A}\in\mathbb{R}^{M\times N}$ be a positive and bounded random matrix whose entries assume a certain type of independence, we provide a concentration inequality for the scaling factors of $\widetilde{A}$ around those of $A = \mathbb{E}[\widetilde{A}]$. This result is employed to bound the convergence rate of the scaling factors of $\widetilde{A}$ to those of $A$, as well as the concentration of the scaled version of $\widetilde{A}$ around the scaled version of $A$ in operator norm, as $M,N\rightarrow\infty$. 
When the entries of $\widetilde{A}$ are independent, $M=N$, and all prescribed row and column sums are $1$ (i.e., doubly-stochastic matrix scaling), both of the previously-mentioned bounds are $\mathcal{O}(\sqrt{\log N / N})$ with high probability.
We demonstrate our results in several simulations.
\end{abstract}

\section{Introduction}
Let $A\in \mathbb{R}^{M\times N}$ be a nonnegative matrix. It was established in a series of classical papers~\cite{sinkhorn1964relationship,sinkhorn1967diagonal,sinkhorn1967concerning,brualdi1966diagonal,brualdi1974dad,marshall1968scaling} that under certain conditions one can find a positive vector $\mathbf{x} = [x_1,\ldots,x_M]$ and a positive vector $\mathbf{y} = [y_1,\ldots,y_N]$, such that the matrix $P = D(\mathbf{x}) A D(\mathbf{y})$ has prescribed row sums $\mathbf{r} = [r_1,\ldots,r_M]$ and column sums $\mathbf{c} = [c_1,\ldots,c_N]$, where $D(\mathbf{v})$ is a diagonal matrix with $\mathbf{v}$ on its main diagonal. The problem of finding the appropriate $\mathbf{x}$ and $\mathbf{y}$ that produce $P$ with the prescribed row and column sums is known as \textit{matrix scaling} or \textit{matrix balancing}; see~\cite{idel2016review} for a comprehensive review of the topic and its various extensions. Formally, we use the following definition.
\begin{defn} [Matrix scaling] \label{def:matrix scaling}
We say that a pair of vectors $(\mathbf{x},\mathbf{y})$ \textit{scales} $A$ to row sums $\mathbf{r}$ and column sums $\mathbf{c}$, if
\begin{equation}
    r_i = \sum_{j=1}^N P_{i,j} = \sum_{j=1}^N x_i A_{i,j} y_j, \qquad \text{and} \qquad c_j = \sum_{i=1}^M P_{i,j} = \sum_{i=1}^M x_i A_{i,j} y_j, \label{eq:equations for x and y in matrix scaling definition}
\end{equation}
for all $i\in[M]$ and  $j\in [N]$. We refer to $\mathbf{x}$ and $\mathbf{y}$ from~\eqref{eq:equations for x and y in matrix scaling definition} (or their entries) as \textit{scaling factors} of $A$.
\end{defn}
In the special case that $M = N$ and $r_i=c_j=1$ for all $i\in[M]$ and $j\in [N]$, the problem of matrix scaling becomes that of finding a doubly-stochastic normalization of $A$, originally studied by Sinkhorn~\cite{sinkhorn1964relationship} with the motivation of estimating doubly-stochastic transition probability matrices. 

It is important to mention that~\eqref{eq:equations for x and y in matrix scaling definition} is a system of nonlinear equations in $\mathbf{x}$ and $\mathbf{y}$ with no closed-form solution. Nevertheless, if the scaling factors $\mathbf{x}$ and $\mathbf{y}$ exist, they can be found by the Sinkhorn-Knopp algorithm~\cite{sinkhorn1967concerning} (also known as the RAS algorithm), which is a simple iterative procedure that alternates between computing $\mathbf{x}$ via~\eqref{eq:equations for x and y in matrix scaling definition} using $\mathbf{y}$ from the previous iteration, and vice versa (a procedure equivalent to alternating between normalizing the rows of $A$ and normalizing the columns of $A$ to have the prescribed row and column sums, respectively).

From a theoretical perspective, given a nonnegative matrix $A$, existence and uniqueness of the scaling factors and of the scaled matrix $P$ depend primarily on the particular zero-pattern of $A$; see~\cite{brualdi1974dad} and references therein for more details.
In this work, we focus on the simpler case that $A$ is strictly positive, in which case existence and uniqueness of the scaling factors and of the scaled matrix $P$ are guaranteed by the following theorem (see~\cite{sinkhorn1967diagonal}). 
\begin{thm}[Existence and uniqueness~\cite{sinkhorn1967diagonal}] \label{thm:existence and uniquness of matrix scaling}
Suppose that $A$, $\mathbf{r}$, and $\mathbf{c}$ are positive, and $\Vert \mathbf{r} \Vert_1 = \Vert \mathbf{c} \Vert_1$. Then, there exists a pair of positive vectors $(\mathbf{x},\mathbf{y})$  that  {scales} $A$ to row sums $\mathbf{r}$ and column sums $\mathbf{c}$. Furthermore, the resulting scaled matrix $P = D(\mathbf{x}) A D(\mathbf{y})$ is unique, and the pair $(\mathbf{x},\mathbf{y})$ can be replaced only with $(\alpha \mathbf{x},\alpha^{-1} \mathbf{y}$), for any $\alpha > 0$.
\end{thm}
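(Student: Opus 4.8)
The plan is to recast the nonlinear system \eqref{eq:equations for x and y in matrix scaling definition} as the first-order optimality conditions of a smooth convex minimization problem, and to read off both existence and uniqueness from the geometry of that problem. First I would remove the positivity constraints via the substitution $x_i = e^{u_i}$ and $y_j = e^{v_j}$, and introduce the potential
\begin{equation}
F(\mathbf{u},\mathbf{v}) = \sum_{i=1}^M\sum_{j=1}^N A_{i,j}\, e^{u_i + v_j} - \sum_{i=1}^M r_i u_i - \sum_{j=1}^N c_j v_j .
\end{equation}
Direct differentiation gives $\partial F/\partial u_i = \sum_j x_i A_{i,j} y_j - r_i$ and $\partial F/\partial v_j = \sum_i x_i A_{i,j} y_j - c_j$, so $\nabla F(\mathbf{u},\mathbf{v}) = 0$ holds if and only if the corresponding $(\mathbf{x},\mathbf{y})$ solves \eqref{eq:equations for x and y in matrix scaling definition}. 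Hence a scaling exists exactly when $F$ has a critical point. Since each summand $A_{i,j} e^{u_i+v_j}$ is convex (as $A_{i,j}>0$) and the remaining terms are linear, $F$ is convex, and its critical points coincide with its global minimizers.

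Next I would analyze the directions of invariance. Along $(\mathbf{u},\mathbf{v}) \mapsto (\mathbf{u} + t\mathbf{1}_M,\, \mathbf{v} - t\mathbf{1}_N)$ every sum $u_i + v_j$ is unchanged, while the linear part changes by $t(\Vert\mathbf{c}\Vert_1 - \Vert\mathbf{r}\Vert_1) = 0$ by hypothesis; thus $F$ is constant on each coset of $\mathcal{L} := \operatorname{span}\{(\mathbf{1}_M,-\mathbf{1}_N)\}$, which is precisely the claimed ambiguity $(\alpha\mathbf{x},\alpha^{-1}\mathbf{y})$ with $\alpha = e^t$. Existence of a minimizer then reduces to showing that $F$ is \emph{coercive modulo $\mathcal{L}$}: on a hyperplane transverse to $\mathcal{L}$ its sublevel sets are bounded, so a minimum is attained by continuity, and this minimizer is a global minimum of $F$.

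I expect coercivity to be the main obstacle. To establish it I would take a sequence leaving every compact set in the transverse hyperplane, pass to a limiting unit direction $(\mathbf{a},\mathbf{b}) \notin \mathcal{L}$, and split according to the exponential term. If $a_i + b_j > 0$ for some $(i,j)$, that single summand grows exponentially while the linear part grows at most linearly, forcing $F \to +\infty$. The delicate case is $a_i + b_j \le 0$ for all $(i,j)$, where the exponential part stays bounded and only $\ell(\mathbf{a},\mathbf{b}) := \sum_i r_i a_i + \sum_j c_j b_j$ matters; here the key trick is to weight by $r_i c_j$ and use $\Vert\mathbf{r}\Vert_1 = \Vert\mathbf{c}\Vert_1 =: T$, which gives
\begin{equation}
\sum_{i=1}^M\sum_{j=1}^N r_i c_j\,(a_i + b_j) = T\Big(\sum_i r_i a_i + \sum_j c_j b_j\Big) = T\, \ell(\mathbf{a},\mathbf{b}) .
\end{equation}
Since $r_i c_j > 0$ and $a_i + b_j \le 0$, the left-hand side is nonpositive, so $\ell(\mathbf{a},\mathbf{b}) \le 0$, with equality only if $a_i + b_j = 0$ for every $(i,j)$ — which forces $(\mathbf{a},\mathbf{b}) \in \mathcal{L}$, contradicting the choice of direction. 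Hence $\ell(\mathbf{a},\mathbf{b}) < 0$ and $F \to +\infty$ along the sequence, proving coercivity and thus existence.

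Finally, for uniqueness I would compute the second directional derivative of the exponential part, $\sum_{i,j} A_{i,j} e^{u_i+v_j}(a_i+b_j)^2 \ge 0$, which vanishes only when $a_i + b_j = 0$ for all $(i,j)$ (again using $A_{i,j} > 0$), i.e. only along $\mathcal{L}$. Thus $F$ is strictly convex on any transverse hyperplane, so its minimizer is unique up to translation by $\mathcal{L}$. Translating back, $(\mathbf{x},\mathbf{y})$ is determined up to $(\alpha\mathbf{x},\alpha^{-1}\mathbf{y})$, and since $P_{i,j} = e^{u_i + v_j} A_{i,j}$ depends only on the $\mathcal{L}$-invariant quantities $u_i + v_j$, the scaled matrix $P$ itself is unique.
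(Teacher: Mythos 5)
Your argument is correct, but note that the paper does not prove this theorem at all: it is imported verbatim from Sinkhorn's 1967 paper \cite{sinkhorn1967diagonal}, and the only place the paper engages with its proof machinery is Lemma~\ref{lem:closeness of scaling factors}, which extends Sinkhorn's original \emph{uniqueness} argument (a direct max/min ratio manipulation of the scaling equations, in the spirit of the bounds \eqref{eq:max u min v upper and lower bounds}--\eqref{eq:min u max v upper and lower bounds}) into a quantitative stability statement. Your route is the variational one familiar from entropic optimal transport: the scaling equations are the stationarity conditions of the convex potential $F(\mathbf{u},\mathbf{v})=\sum_{i,j}A_{i,j}e^{u_i+v_j}-\langle\mathbf{r},\mathbf{u}\rangle-\langle\mathbf{c},\mathbf{v}\rangle$, existence follows from coercivity modulo the invariance line $\mathcal{L}=\operatorname{span}\{(\mathbf{1}_M,-\mathbf{1}_N)\}$, and uniqueness up to $(\alpha\mathbf{x},\alpha^{-1}\mathbf{y})$ follows from strict convexity transverse to $\mathcal{L}$. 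All the steps check out: the gradient identification, the invariance computation using $\Vert\mathbf{r}\Vert_1=\Vert\mathbf{c}\Vert_1$, the recession analysis (in the delicate case $a_i+b_j\le 0$ the weighted sum $\sum_{i,j}r_ic_j(a_i+b_j)=T\,\ell(\mathbf{a},\mathbf{b})$ correctly forces $\ell<0$ unless $(\mathbf{a},\mathbf{b})\in\mathcal{L}$, and since the exponential part is nonnegative one gets $F\ge-\ell$ along the sequence even without bounding it), and the second-derivative argument $\sum_{i,j}A_{i,j}e^{u_i+v_j}(a_i+b_j)^2=0\Rightarrow(\mathbf{a},\mathbf{b})\in\mathcal{L}$, which uses strict positivity of $A$ exactly where it is needed. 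Compared with Sinkhorn's original approach (convergence of the alternating normalization iteration plus a direct ratio argument for uniqueness), your proof is shorter and more conceptual, and it makes the role of the compatibility condition $\Vert\mathbf{r}\Vert_1=\Vert\mathbf{c}\Vert_1$ and of strict positivity completely transparent; what it does not give, and what the paper actually needs downstream, is the quantitative stability of the scaling factors under perturbation of the row/column sums, for which the elementary ratio manipulations of Lemma~\ref{lem:closeness of scaling factors} remain the natural tool.
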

Over the years, matrix scaling and the Sinkhorn-Knopp algorithm have found a wide array of applications in science and engineering. In economy and operations research, classical applications of matrix scaling include transportation planning~\cite{lamond1981bregman}, analyzing migration fields~\cite{slater1984measuring}, and estimating social accounting matrices~\cite{schneider1990comparative}. In image processing and computer vision, matrix scaling was employed for image denoising ~\cite{milanfar2013symmetrizing} and graph matching~\cite{cour2006balanced}. Recently, matrix scaling has been attracting a growing interest from the machine learning community, with applications in manifold learning~\cite{marshall2019manifold,wormell2020spectral}, clustering~\cite{zass2006doubly,lim2020doubly}, and classification~\cite{frogner2015learning}. See also~\cite{peyre2019computational, cuturi2013sinkhorn} for applications of matrix scaling in data science through the machinery of optimal transport. 

In many practical situations, matrix scaling is actually applied to a random matrix that represents a perturbation, or a random observation, of an underlying deterministic population matrix; see for example~\cite{landa2020doubly,wormell2020spectral,milanfar2013symmetrizing,cho2010reweighted}. Arguably, this is the case in all of the previously-mentioned applications of matrix scaling whenever real data is involved. In particular, applications of matrix scaling in machine learning and data science often involve large data matrices that suffer from corruptions and measurement errors, and hence are more accurately described by random models. Due to such challenges, it is important to understand the influence of random perturbations in $A$ on the required scaling factors and on the resulting scaled matrix, particularly in the setting where $A$ is large and the entrywise perturbations are not necessarily small. It is noteworthy that existing literature related to scaling random matrices is mostly concerned with special cases such as the scaling of symmetric kernel matrices~\cite{wormell2020spectral,landa2020doubly} and the spectral properties of random  doubly-stochastic matrices~\cite{nguyen2014random,cappellini2009random}.

Let $\widetilde{A}\in\mathbb{R}^{M\times N}$ be a positive random matrix, and define $A = \mathbb{E}[\widetilde{A}]$. Theorem~\ref{thm:existence and uniquness of matrix scaling} establishes the existence and uniqueness of a set of scaling factors $\{(\alpha \mathbf{x},\alpha^{-1}\mathbf{y})\}_{\alpha>0}$ of $A$, together with the existence and uniqueness of the corresponding scaled matrix $P$. Theorem~\ref{thm:existence and uniquness of matrix scaling} can also be applied analogously to $\widetilde{A}$, establishing the existence and uniqueness of a set of random scaling factors $\{(\alpha \widetilde{\mathbf{x}},\alpha^{-1}\widetilde{\mathbf{y}})\}_{\alpha>0}$ of $\widetilde{A}$, as well as the existence and uniqueness of the corresponding scaled random matrix $\widetilde{P} = D(\widetilde{\mathbf{x}}) \widetilde{A} D(\widetilde{\mathbf{y}})$. The main purpose of this work is to establish that under suitable conditions on $\widetilde{A}$, $\mathbf{r}$, and $\mathbf{c}$, there is a pair of scaling factors $(\widetilde{\mathbf{x}},\widetilde{\mathbf{y}})$ of $\widetilde{A}$ that concentrates around a pair of scaling factors $({\mathbf{x}},{\mathbf{y}})$ of $A$ (in an appropriate sense), and furthermore, the resulting scaled random matrix $\widetilde{P}$ concentrates around $P$ in operator norm.
Notably, the main technical challenge in deriving such results is the implicit nonlinear representation of $\mathbf{x}$ and $\mathbf{y}$ in~\eqref{eq:equations for x and y in matrix scaling definition}, which prohibits the direct application of standard concentration inequalities. Therefore, an important aspect of this work is providing a mechanism for applying standard vector and matrix concentration inequalities in the analysis of random matrix scaling.

The main contributions of this work are as follows. We begin by providing a concentration inequality for the scaling factors of $\widetilde{A}$ around those of $A$ assuming the entries of $\widetilde{A}$ are bounded from above and from below away from zero, and in addition that they satisfy the property of being independent within each row and each column of $\widetilde{A}$ separately; see Theorem~\ref{thm:deviation of scaling factors} in Section~\ref{sec:concentration of scaling factors}. To that end, we derive a result concerning the stability of the scaling factors of a matrix under perturbations in the prescribed row and column sums, which may be of independent interest; see Lemma~\ref{lem:closeness of scaling factors} in Section~\ref{sec:supporting lemmas}. We then turn to consider an asymptotic setting of $M,N\rightarrow \infty$, and employ Theorem~\ref{thm:deviation of scaling factors} to bound the pointwise convergence rate of the scaling factors of $\widetilde{A}$ to those of $A$; see Theorem~\ref{thm:relative entry-wise convergence of scaling factors} and Corollary~\ref{cor:relative convergence of scaling factors} in Section~\ref{sec:convergence of scaling factors}. In addition, under the same asymptotic setting as above but further assuming that all entries of $\widetilde{A}$ are independent, we make use of Theorem~\ref{thm:relative entry-wise convergence of scaling factors} to bound the asymptotic concentration of $\widetilde{P}$ around $P$ in operator norm; see Theorem~\ref{thm:convergence of scaled matrix in operator norm} and Corollary~\ref{cor:relative convergence of scaled matrix in operator norm} in Section~\ref{sec:convergence of scaled matrix}. 
We conclude by conducting several numerical experiments that corroborate our theoretical findings and demonstrate that our convergence rates are tight in certain situations; see Section~\ref{sec:numerical experiments}.

\section{Main results}

\subsection{Concentration of matrix scaling factors} \label{sec:concentration of scaling factors}

Let us define 
\begin{equation}
    \overline{r}_i = \frac{r_i}{\sqrt{\Vert \mathbf{r} \Vert_1}}, \qquad\qquad \overline{c}_j = \frac{c_j}{\sqrt{\Vert \mathbf{c} \Vert_1}}, \label{eq:r_overline and c_overline def}
\end{equation}
for all  $i\in [M]$ and  $j\in [N]$.
The following lemma describes a useful normalization of the scaling factors of $A$ and the resulting bounds on their entries.
\begin{lem}[Boundedness of scaling factors] \label{lem:boundedness of scaling factors}
Suppose that $A$, $\mathbf{r}$, and $\mathbf{c}$ are positive, and $\Vert \mathbf{r} \Vert_1 = \Vert \mathbf{c} \Vert_1$. Then, there exists a unique pair of positive vectors $(\mathbf{x},\mathbf{y})$ that satisfies $\Vert \mathbf{x} \Vert_1 = \Vert \mathbf{y} \Vert_1$ and scales $A$ to row sums $\mathbf{r}$ and column sums $\mathbf{c}$. Furthermore, denoting $a = \min_{i,j} A_{i,j}$ and $b = \max_{i,j} A_{i,j}$, we have that for all $i\in [M]$ and $j\in [N]$:
    \begin{equation}
        \frac{\sqrt{a}}{b} \leq \frac{x_i}{\overline{r}_i} \leq \frac{\sqrt{b}}{a}, \qquad \qquad  \frac{\sqrt{a}}{b} \leq \frac{y_j}{\overline{c}_j} \leq \frac{\sqrt{b}}{a}. \label{eq:x_i and y_i bounds}
    \end{equation}
\end{lem}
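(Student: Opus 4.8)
The plan is to settle the normalization claim first and then read off the entrywise bounds directly from the scaling equations~\eqref{eq:equations for x and y in matrix scaling definition}.

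For the first claim, I would invoke Theorem~\ref{thm:existence and uniquness of matrix scaling} to obtain \emph{some} scaling pair $(\mathbf{x}_0,\mathbf{y}_0)$, and then search for the scalar $\alpha>0$ making $(\alpha\mathbf{x}_0,\alpha^{-1}\mathbf{y}_0)$ have equal $\ell_1$ norms. Since both vectors are positive, $\Vert\alpha\mathbf{x}_0\Vert_1=\alpha\Vert\mathbf{x}_0\Vert_1$ and $\Vert\alpha^{-1}\mathbf{y}_0\Vert_1=\alpha^{-1}\Vert\mathbf{y}_0\Vert_1$, so the requirement $\alpha\Vert\mathbf{x}_0\Vert_1=\alpha^{-1}\Vert\mathbf{y}_0\Vert_1$ determines $\alpha=\sqrt{\Vert\mathbf{y}_0\Vert_1/\Vert\mathbf{x}_0\Vert_1}$ uniquely. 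Because Theorem~\ref{thm:existence and uniquness of matrix scaling} asserts that the only freedom in the scaling pair is precisely this scalar, the resulting normalized pair $(\mathbf{x},\mathbf{y})$ with $\Vert\mathbf{x}\Vert_1=\Vert\mathbf{y}\Vert_1$ exists and is unique.

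For the bounds, I would rewrite~\eqref{eq:equations for x and y in matrix scaling definition} in explicit fixed-point form, $x_i = r_i/\sum_{j} A_{i,j}y_j$ and $y_j = c_j/\sum_{i} A_{i,j}x_i$. Since all $y_j>0$ and $a\le A_{i,j}\le b$, the denominator is sandwiched as $a\Vert\mathbf{y}\Vert_1\le\sum_j A_{i,j}y_j\le b\Vert\mathbf{y}\Vert_1$, which gives $r_i/(b\Vert\mathbf{y}\Vert_1)\le x_i\le r_i/(a\Vert\mathbf{y}\Vert_1)$, and symmetrically for $y_j$ in terms of $\Vert\mathbf{x}\Vert_1$. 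Writing $S=\Vert\mathbf{x}\Vert_1=\Vert\mathbf{y}\Vert_1$ via the normalization just established, these read $r_i/(bS)\le x_i\le r_i/(aS)$ and $c_j/(bS)\le y_j\le c_j/(aS)$.

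The main step, and the only place where anything nontrivial happens, is to pin down the size of $S$. Summing the bound on $x_i$ over $i$ and using the self-consistency $\sum_i x_i=\Vert\mathbf{x}\Vert_1=S$ yields $\Vert\mathbf{r}\Vert_1/(bS)\le S\le \Vert\mathbf{r}\Vert_1/(aS)$, which rearranges to $\sqrt{\Vert\mathbf{r}\Vert_1/b}\le S\le\sqrt{\Vert\mathbf{r}\Vert_1/a}$. Substituting the upper bound on $S$ into the lower bound on $x_i$, and the lower bound on $S$ into the upper bound on $x_i$, and recalling $\overline{r}_i=r_i/\sqrt{\Vert\mathbf{r}\Vert_1}$, gives exactly $\tfrac{\sqrt{a}}{b}\,\overline{r}_i\le x_i\le \tfrac{\sqrt{b}}{a}\,\overline{r}_i$. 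The bounds on $y_j$ follow identically, the key point being that the hypothesis $\Vert\mathbf{c}\Vert_1=\Vert\mathbf{r}\Vert_1$ makes the \emph{same} two-sided bound on $S$ applicable. I do not anticipate a real obstacle: the argument is a self-consistent sandwiching, and the only thing requiring care is consistently exploiting the normalization $\Vert\mathbf{x}\Vert_1=\Vert\mathbf{y}\Vert_1$ together with $\Vert\mathbf{r}\Vert_1=\Vert\mathbf{c}\Vert_1$, so that a single quantity $S$ simultaneously controls both families of scaling factors.
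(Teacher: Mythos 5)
Your proposal is correct and follows essentially the same route as the paper's proof: obtain the normalized pair from Theorem~\ref{thm:existence and uniquness of matrix scaling} by solving for the unique $\alpha$, sandwich $x_i$ and $y_j$ via the fixed-point form of~\eqref{eq:equations for x and y in matrix scaling definition}, sum over $i$ to pin down $S=\Vert\mathbf{x}\Vert_1=\Vert\mathbf{y}\Vert_1$ between $\sqrt{s/b}$ and $\sqrt{s/a}$, and substitute back. No gaps.
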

The proof can be found in Section~\ref{sec:proof of boundednes lemma} and is based on a straightforward manipulation of the system of equations in~\eqref{eq:equations for x and y in matrix scaling definition}. The normalization $\Vert \mathbf{x} \Vert_1 = \Vert \mathbf{y} \Vert_1$ is natural and convenient, since it provides a symmetric bound for the entries of $\mathbf{x}$ and $\mathbf{y}$ in terms of $a$ and $b$ while precisely characterizing their magnitudes according to $\mathbf{r}$ and $\mathbf{c}$, respectively. Note that the condition $\Vert \mathbf{r} \Vert_1 = \Vert \mathbf{c} \Vert_1$ in Theorem~\ref{thm:existence and uniquness of matrix scaling} and in Lemma~\ref{lem:boundedness of scaling factors} is necessary for the existence of the scaling factors, as by Definition~\ref{def:matrix scaling} each of the quantities $\Vert \mathbf{r} \Vert_1$ and $\Vert \mathbf{c} \Vert_1$ should be the sum of all entries in the scaled matrix $P = D(\mathbf{x}) A D(\mathbf{y})$. From this point onward we will always assume that $\mathbf{r}$ and $\mathbf{c}$ are positive and $\Vert \mathbf{r} \Vert_1 = \Vert \mathbf{c} \Vert_1$, denoting the sum of all entries in $P$ by
\begin{equation}
    s = \Vert \mathbf{r}\Vert_1 = \Vert \mathbf{c} \Vert_1. \label{eq: s def}
\end{equation}
We now have the following theorem, which provides a concentration inequality for a certain pair of scaling factors of $\widetilde{A}$ around the pair $(\mathbf{x},\mathbf{y})$ from Lemma~\ref{lem:boundedness of scaling factors} (taking $A = \mathbb{E}[\widetilde{A}]$).
\begin{thm} [Concentration of scaling factors]\label{thm:deviation of scaling factors}
Let $\widetilde{A}\in\mathbb{R}^{M\times N}$ be a positive random matrix, $A = \mathbb{E}[\widetilde{A}]$, and $(\mathbf{x},\mathbf{y})$ be the unique pair of positive vectors that satisfies $\Vert \mathbf{x} \Vert_1 = \Vert \mathbf{y} \Vert_1$ and scales $A$ to row sums $\mathbf{r}$ and column sums $\mathbf{c}$. Suppose that $\widetilde{A}_{i,j} \in [a_{i,j},b_{i,j}]$ a.s. for all $i\in [M]$ and $j\in [N]$, and denote $a = \min_{i,j} a_{i,j}$, $b = \max_{i,j} b_{i,j}$, and $d = \max_{i,j} \{b_{i,j} - a_{i,j}\}$. Suppose further that $\{\widetilde{A}_{i,j}\}_{j=1}^N$ are independent for each $i\in[M]$, and $\{\widetilde{A}_{i,j}\}_{i=1}^M$ are independent for each $j\in[N]$. Then, there exists a pair of positive random vectors $(\widetilde{\mathbf{x}},\widetilde{\mathbf{y}})$ that scales $\widetilde{A}$ to row sums $\mathbf{r}$ and column $\mathbf{c}$, such that for any $\delta \in (0, 1]$, with probability at least
\begin{equation}
    1 - 2 M \operatorname{exp}\left( -\frac{\delta^2 s^2}{ C_p^2 \Vert \mathbf{c} \Vert_2^2} \right) - 2 N \operatorname{exp}\left(-\frac{\delta^2 s^2}{ C_p^2 \Vert \mathbf{r} \Vert_2^2}\right), \label{eq:probability of tail bounds on x_tilde and y_tilde}
\end{equation}
we have that for all $i\in [M]$ and $j\in [N]$:
    \begin{equation}
        \frac{\vert \widetilde{x}_i - x_i \vert}{x_i} \leq  
      \frac{ C_e \delta s }{M \min_i r_i} , \qquad \qquad
     \frac{\vert \widetilde{y}_j - y_j \vert}{y_j} \leq   \frac{ C_e \delta s }{N \min_j {c}_j }, \label{eq:x_tilde_i and y_tilde_j relative error bounds}
    \end{equation}
where
\begin{equation}
    C_p = \sqrt{2}\left(\frac{b d}{a^2}\right), \qquad \qquad C_e = 1 + 2\left(\frac{b}{a} \right)^{7/2}. \label{eq:C_p and C_e expressions}
\end{equation}

\end{thm}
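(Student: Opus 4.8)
The plan is to circumvent the nonlinearity in~\eqref{eq:equations for x and y in matrix scaling definition} by exploiting a simple but crucial observation: although we do not know the scaling factors $(\widetilde{\mathbf{x}},\widetilde{\mathbf{y}})$ of $\widetilde{A}$, we do know a pair of vectors that scales $\widetilde{A}$ to \emph{some} row and column sums, namely $(\mathbf{x},\mathbf{y})$ itself. Concretely, let $(\mathbf{x},\mathbf{y})$ be the normalized scaling factors of $A$ from Lemma~\ref{lem:boundedness of scaling factors}, and define the random vectors $\widetilde{\mathbf{r}},\widetilde{\mathbf{c}}$ as the actual row and column sums of $D(\mathbf{x})\widetilde{A}D(\mathbf{y})$, i.e. $\widetilde{r}_i = \sum_j x_i \widetilde{A}_{i,j} y_j$ and $\widetilde{c}_j = \sum_i x_i \widetilde{A}_{i,j} y_j$. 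By construction $(\mathbf{x},\mathbf{y})$ scales $\widetilde{A}$ to $(\widetilde{\mathbf{r}},\widetilde{\mathbf{c}})$, while the desired $(\widetilde{\mathbf{x}},\widetilde{\mathbf{y}})$ scales $\widetilde{A}$ to $(\mathbf{r},\mathbf{c})$; the latter exists by Theorem~\ref{thm:existence and uniquness of matrix scaling} since $\widetilde{A}$ is positive and $\Vert\widetilde{\mathbf{r}}\Vert_1 = \Vert\widetilde{\mathbf{c}}\Vert_1$ (both equal the sum of all entries of $D(\mathbf{x})\widetilde{A}D(\mathbf{y})$). Thus $(\mathbf{x},\mathbf{y})$ and $(\widetilde{\mathbf{x}},\widetilde{\mathbf{y}})$ are two scalings of the \emph{same} matrix $\widetilde{A}$ to the two nearby target pairs $(\widetilde{\mathbf{r}},\widetilde{\mathbf{c}})$ and $(\mathbf{r},\mathbf{c})$, which reduces the problem to (i) showing that $(\widetilde{\mathbf{r}},\widetilde{\mathbf{c}})$ concentrates around $(\mathbf{r},\mathbf{c})$, and (ii) invoking the stability of scaling factors under target perturbations. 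Note that the constants $a,b$ used below are legitimate since $A_{i,j} = \mathbb{E}[\widetilde{A}_{i,j}] \in [a_{i,j},b_{i,j}]$, so $a \le A_{i,j} \le b$ entrywise, and Lemma~\ref{lem:boundedness of scaling factors} applies to $A$ with these values.

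For step (i), I would fix a row $i$. Since $\mathbb{E}[\widetilde{A}_{i,j}] = A_{i,j}$, we have $\mathbb{E}[\widetilde{r}_i] = r_i$, and $\widetilde{r}_i$ is a sum over $j$ of the independent (by the within-row independence hypothesis) random variables $x_i \widetilde{A}_{i,j} y_j$, each confined to an interval of length $x_i y_j (b_{i,j}-a_{i,j}) \le x_i y_j d$. Hoeffding's inequality then gives
\[
\mathbb{P}\bigl(\vert \widetilde{r}_i - r_i\vert \ge t\bigr) \le 2\exp\!\left(-\frac{2 t^2}{d^2 x_i^2 \Vert \mathbf{y}\Vert_2^2}\right).
\]
Using the boundedness estimates of Lemma~\ref{lem:boundedness of scaling factors}, namely $x_i \le (\sqrt{b}/a)\,\overline{r}_i$ and $y_j \le (\sqrt{b}/a)\,\overline{c}_j$ with $\overline{r}_i = r_i/\sqrt{s}$ and $\overline{c}_j = c_j/\sqrt{s}$ from~\eqref{eq:r_overline and c_overline def}, one obtains $d^2 x_i^2 \Vert\mathbf{y}\Vert_2^2 \le (C_p^2/2)\, r_i^2 \Vert\mathbf{c}\Vert_2^2 / s^2$ with $C_p$ as in~\eqref{eq:C_p and C_e expressions}. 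Choosing the threshold $t = \delta r_i/2$ collapses the exponent exactly to $\delta^2 s^2/(C_p^2\Vert\mathbf{c}\Vert_2^2)$, so that $\mathbb{P}(\vert\widetilde{r}_i - r_i\vert \ge \delta r_i/2)$ is bounded by the corresponding term in~\eqref{eq:probability of tail bounds on x_tilde and y_tilde}. The columns are handled symmetrically using the within-column independence hypothesis, yielding $\Vert\mathbf{r}\Vert_2$ in place of $\Vert\mathbf{c}\Vert_2$. A union bound over the $M$ rows and $N$ columns then shows that, with probability at least the quantity in~\eqref{eq:probability of tail bounds on x_tilde and y_tilde}, the event
\[
\vert \widetilde{r}_i - r_i\vert \le \tfrac{\delta}{2}\, r_i \quad (\forall i), \qquad \vert \widetilde{c}_j - c_j\vert \le \tfrac{\delta}{2}\, c_j \quad (\forall j)
\]
holds; on this event all target sums deviate by relative factor at most $\delta/2$.

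For step (ii), I would condition on the good event above and apply the stability result Lemma~\ref{lem:closeness of scaling factors} to the two scalings of $\widetilde{A}$ identified in the first paragraph. Since $(\mathbf{x},\mathbf{y})$ and $(\widetilde{\mathbf{x}},\widetilde{\mathbf{y}})$ scale the common matrix $\widetilde{A}$ to targets that differ by a relative factor of at most $\delta/2$, the lemma converts this relative target perturbation into a relative bound on the scaling factors, with the geometric factor $C_e = 1 + 2(b/a)^{7/2}$ and the normalization $s/(M\min_i r_i)$ (respectively $s/(N\min_j c_j)$) emerging from its statement; substituting $\delta/2$ for the relative target deviation then yields precisely~\eqref{eq:x_tilde_i and y_tilde_j relative error bounds}.

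The genuinely hard part is not in this theorem but is deferred to Lemma~\ref{lem:closeness of scaling factors}: the nonlinear sensitivity analysis of the scaling factors with respect to the prescribed row and column sums. The reduction in the first paragraph is exactly what makes this decoupling possible — by measuring the randomness of $\widetilde{A}$ through the \emph{linear} quantities $\widetilde{r}_i,\widetilde{c}_j$ (to which Hoeffding applies directly) rather than through the implicitly-defined $\widetilde{\mathbf{x}},\widetilde{\mathbf{y}}$, all nonlinearity is confined to the deterministic stability step. Within the present proof, the only delicate bookkeeping will be the choice of threshold $t = \delta r_i/2$ together with the boundedness estimates of Lemma~\ref{lem:boundedness of scaling factors}, which must be combined so that the Hoeffding exponents match the stated tail~\eqref{eq:probability of tail bounds on x_tilde and y_tilde} on the nose.
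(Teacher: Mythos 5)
Your proposal is correct and follows essentially the same route as the paper: step (i) is exactly the paper's Lemma~\ref{lem:deviation from scaling} (Hoeffding applied to the row and column sums of $D(\mathbf{x})\widetilde{A}D(\mathbf{y})$, with the same choice $\varepsilon=\delta/2$ and the same use of the boundedness estimates to produce $C_p$), and step (ii) is exactly the paper's Lemma~\ref{lem:closeness of scaling factors}, your ``two scalings of the same matrix to nearby targets'' framing being the equivalence the paper itself notes between approximate scaling and perturbed prescribed sums. The only detail you elide --- combining $\tfrac{\varepsilon}{1-\varepsilon}\le 2\varepsilon$, $C_1,C_2\ge\sqrt{a}/b$, and $s\ge M\min_i r_i$ to assemble $C_e=1+2(b/a)^{7/2}$ --- goes through exactly as you indicate.
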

Note that Theorem~\ref{thm:deviation of scaling factors} requires that the entries of $\widetilde{A}$ are independent in each of its rows and each of its columns separately. This condition is clearly less restrictive than the requirement that all of the entries of $\widetilde{A}$ are independent. For instance, consider the matrix $\widetilde{A}_{i,j} = g_{i,j}(u_i v_j)$, where $\{u_i\}_{i=1}^M$, $\{v_j\}_{j=1}^N$ are independent Rademacher variables, and $g_{i,j}:\{-1,1\} \rightarrow \{a_{i,j},b_{i,j}\}$ are deterministic functions with $0 < a_{i,j} < b_{i,j}$, for all $i\in [M]$, $j\in [N]$. Evidently, each row and column of $\widetilde{A}$ contains independent entries, yet the entries of $\widetilde{A}$ are strongly dependant since knowing any single row (column) of $\widetilde{A}$ substantially restricts the distribution of any other row (column).

It is worthwhile to point out that Theorem~\ref{thm:deviation of scaling factors} also implies the following statement, which is perhaps more intuitive than the formulation in Theorem~\ref{thm:deviation of scaling factors}. For any pair of scaling factors $(\widetilde{\mathbf{x}}^{'},\widetilde{\mathbf{y}}^{'})$ of $\widetilde{A}$, Theorem~\ref{thm:deviation of scaling factors} implies that there exists a pair of scaling factors $({\mathbf{x}}^{'},{\mathbf{y}}^{'})$ of $A$ such that with probability at least~\eqref{eq:probability of tail bounds on x_tilde and y_tilde} the bounds in~\eqref{eq:x_tilde_i and y_tilde_j relative error bounds} hold if we replace $(\widetilde{\mathbf{x}},\widetilde{\mathbf{y}})$ and $({\mathbf{x}},{\mathbf{y}})$ with $(\widetilde{\mathbf{x}}^{'},\widetilde{\mathbf{y}}^{'})$ and $({\mathbf{x}}^{'},{\mathbf{y}}^{'})$, respectively (under to the conditions in Theorem~\ref{thm:deviation of scaling factors}). This claim stems simply from the fact that any pair $(\widetilde{\mathbf{x}}^{'},\widetilde{\mathbf{y}}^{'})$ of scaling factors of $\widetilde{A}$ can be written as $(\alpha \widetilde{\mathbf{x}},\alpha^{-1} \widetilde{\mathbf{y}})$ for some $\alpha>0$, where $( \widetilde{\mathbf{x}},\widetilde{\mathbf{y}})$ is the specific pair whose existence is guaranteed by Theorem~\ref{thm:deviation of scaling factors}. Subsequently, taking $(\mathbf{x}^{'},\mathbf{y}^{'}) = (\alpha\mathbf{x},\alpha^{-1}\mathbf{y})$, where $( {\mathbf{x}},{\mathbf{y}})$ is as in Theorem~\ref{thm:deviation of scaling factors}, gives that $|\widetilde{x}^{'}_i - x_i^{'}|/x_i^{'} = |\widetilde{x}_i - x_i|/x_i$ and $|\widetilde{y}^{'}_j - y_j^{'}|/y_j^{'} = |\widetilde{y}_j - y_j|/y_j$. 

The proof of Theorem~\ref{thm:deviation of scaling factors} can be found in Section~\ref{sec:proof of main theorem} and is based on the following idea, which is a simple two-step procedure. First, we use Lemma~\ref{lem:boundedness of scaling factors} in conjunction with Hoeffding's inequality~\cite{hoeffding1994probability} to prove that the row and column sums of $D(\mathbf{x}) \widetilde{A} D(\mathbf{y})$ concentrate around $\mathbf{r}$ and $\mathbf{c}$, respectively; see Lemma~\ref{lem:deviation from scaling} in Section~\ref{sec:proof of main theorem}. Second, we prove that if the matrix $\widetilde{A}$ can be approximately scaled by the pair $(\mathbf{x},\mathbf{y})$, it must imply that $(\mathbf{x},\mathbf{y})$ is sufficiently close to a pair of scaling factors of $\widetilde{A}$. This result is based on Sinkhorn's technique in~\cite{sinkhorn1967diagonal} for proving the uniqueness of the scaling factors, which we substantially extend to describe the stability of the scaling factors under approximate scaling (or equivalently, under perturbations of the prescribed row and column sums); see Lemma~\ref{lem:closeness of scaling factors} in Section~\ref{sec:proof of main theorem}. It is worthwhile to point out that Hoeffding's inequality in the proof of Theorem~\ref{thm:deviation of scaling factors} can be replaced with any other concentration inequality for sums of random variables, allowing one to relax the assumptions on boundedness and independence. 

\subsection{Asymptotic convergence of scaling factors } \label{sec:convergence of scaling factors}
We now place ourselves in an asymptotic setting where the dimensions of $\widetilde{A}$ tend to infinity, and apply Theorem~\ref{thm:deviation of scaling factors} to study the asymptotic convergence of the scaling factors of $\widetilde{A}$ to those of $A$ in relative error. Let $\{ \widetilde{A}^{(N)} \in \mathbb{R}^{M_N \times N} \}_{N=N_0}^\infty$ be a sequence of positive random matrices and define $A^{(N)} = \mathbb{E}[\widetilde{A}^{(N)}]$, where $N_0$ is some positive integer, and $\lim_{N\rightarrow \infty} M_N = \infty$. Suppose that for any positive integer $N \geq N_0$, we are given positive row sums $\mathbf{r}^{(N)}$ and column sums $\mathbf{c}^{(N)}$ that satisfy $\Vert \mathbf{r}^{(N)} \Vert_1 = \Vert \mathbf{c}^{(N)} \Vert_1$. According to Lemma~\ref{lem:boundedness of scaling factors}, $A^{(N)}$ can be scaled to row sums $\mathbf{r}^{(N)}$ and column sums $\mathbf{c}^{(N)}$ by a unique pair of positive vectors $(\mathbf{x}^{(N)},\mathbf{y}^{(N)})$ that satisfies $\Vert \mathbf{x}^{(N)}\Vert_1 = \Vert \mathbf{y}^{(N)} \Vert_1$. 
As a measure of discrepancy between $(\mathbf{x}^{(N)},\mathbf{y}^{(N)})$ and another pair of vectors $(\widetilde{\mathbf{x}},\widetilde{\mathbf{y}})$, where $\widetilde{\mathbf{x}}\in \mathbb{R}^{M_N}$ and $\widetilde{\mathbf{y}}\in \mathbb{R}^{N}$, we define
\begin{equation}
    \mathcal{E}_N(\widetilde{\mathbf{x}},\widetilde{\mathbf{y}}) = \max \left\{ \max_{i\in [M_N]} \frac{\vert \widetilde{x}_i - x_i^{(N)} \vert}{x_i^{(N)}}, \max_{j\in [N]} \frac{\vert \widetilde{y}_j - y_j^{(N)} \vert}{y_j^{(N)}} \right\}. \label{eq:E_mathcal def}
\end{equation}
It is important to mention that the scaling factors $x_i^{(N)}$ and $y_j^{(N)}$ of $A^{(N)}$ can potentially converge to $0$ or grow unbounded as $N\rightarrow\infty$, depending on the asymptotic behavior of the prescribed row sums $\mathbf{r}^{(N)}$ and column sums $\mathbf{c}^{(N)}$. Consequently, the normalizations by $x_i^{(N)}$ and $y_j^{(N)}$ appearing in the error measure~\eqref{eq:E_mathcal def} are important for making $\mathcal{E}_N(\widetilde{\mathbf{x}},\widetilde{\mathbf{y}})$ meaningful in the asymptotic regime of $N\rightarrow\infty$.
Let us denote $  s^{(N)} = \Vert \mathbf{r}^{(N)} \Vert_1 = \Vert \mathbf{c}^{(N)} \Vert_1$, and define the quantities
\begin{equation}
    \rho_1^{(N)} = \max \left\{\frac{\Vert \mathbf{r}^{(N)} \Vert_2}{s^{(N)}}, \frac{\Vert \mathbf{c}^{(N)} \Vert_2}{s^{(N)}} \right\} , \qquad \qquad \rho_2^{(N)} = \max \left\{\frac{ s^{(N)} }{M_N \min_i {r}_i^{(N)}}, \frac{ s^{(N)} }{N \min_j {c}_j^{(N)}} \right\}. \label{eq:rho_1 and rho_2 def}
\end{equation}
In what follows we use the notation $X_N = \mathcal{O}_{\text{w.h.p}}(\gamma_N)$, where $\{X_N\}$ is a sequence of random variables and $\{\gamma_N\}$ is a deterministic sequence, to mean \textit{order with high probability}, namely that there exists a constant $C$ such that $\lim_{N\rightarrow\infty}\operatorname{Pr}\{X_N \leq C \gamma_N\} = 1$. Note that $\mathcal{O}_{\text{w.h.p}}(\cdot)$ is not equivalent to \textit{order in probability} $\mathcal{O}_p(\cdot)$~\cite{mann1943stochastic}. In particular, $X_N = \mathcal{O}_{\text{w.h.p}}(\gamma_N)$ implies that $X_N = \mathcal{O}_{p}(\gamma_N)$ but not the other way around.

We now have the following theorem, which provides a bound on the convergence rate of a certain sequence of scaling factors $\{(\widetilde{\mathbf{x}}^{(N)},\widetilde{\mathbf{y}}^{(N)})\}$ of $\{\widetilde{A}^{(N)}\}$ to $\{(\mathbf{x}^{(N)},\mathbf{y}^{(N)})\}$.
\begin{thm} [Convergence rate of scaling factors] \label{thm:relative entry-wise convergence of scaling factors}
Suppose that for all indices $N$ the matrices $\{\widetilde{A}^{(N)}\}$ satisfy the conditions in Theorem~\ref{thm:deviation of scaling factors} with universal positive constants $a,b$ (independent of $N$).
Then, there exists a sequence of scaling factors $\{(\widetilde{\mathbf{x}}^{(N)},\widetilde{\mathbf{y}}^{(N)})\}$ of $\{\widetilde{A}^{(N)}\}$, such that
\begin{equation}
    \mathcal{E}_N(\widetilde{\mathbf{x}}^{(N)},\widetilde{\mathbf{y}}^{(N)}) = \mathcal{O}_{\text{w.h.p}}\left( \rho_1^{(N)} \rho_2^{(N)}\sqrt{{\log (\max \{ M_N, N \}) } }\right). \label{eq:E_mathcal convergence rate}
\end{equation}
\end{thm}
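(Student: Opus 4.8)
The plan is to apply Theorem~\ref{thm:deviation of scaling factors} to each matrix $\widetilde{A}^{(N)}$ individually with a carefully chosen deviation parameter $\delta = \delta_N$, and then verify both that the resulting relative-error bound~\eqref{eq:x_tilde_i and y_tilde_j relative error bounds} matches the target rate and that the associated failure probability~\eqref{eq:probability of tail bounds on x_tilde and y_tilde} vanishes as $N\to\infty$. The natural choice is $\delta_N = C\,\rho_1^{(N)}\sqrt{\log(\max\{M_N,N\})}$ for a sufficiently large universal constant $C$. Since $a$ and $b$ are universal by assumption and $d \le b$, the constants $C_p$ and $C_e$ from~\eqref{eq:C_p and C_e expressions} are bounded by universal constants independent of $N$, which is what permits $C$ to be chosen independently of $N$ as well.

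First I would translate the error bound~\eqref{eq:x_tilde_i and y_tilde_j relative error bounds} into the desired rate. By the definition of $\rho_2^{(N)}$ in~\eqref{eq:rho_1 and rho_2 def}, both $\tfrac{s^{(N)}}{M_N\min_i r_i^{(N)}}$ and $\tfrac{s^{(N)}}{N\min_j c_j^{(N)}}$ are at most $\rho_2^{(N)}$, so the two right-hand sides in~\eqref{eq:x_tilde_i and y_tilde_j relative error bounds} are each at most $C_e\,\delta_N\,\rho_2^{(N)}$. Taking the maximum over $i$ and $j$ as in~\eqref{eq:E_mathcal def} yields $\mathcal{E}_N \le C_e\,\delta_N\,\rho_2^{(N)} = C\,C_e\,\rho_1^{(N)}\rho_2^{(N)}\sqrt{\log(\max\{M_N,N\})}$, which is exactly the claimed rate up to the constant $C C_e$. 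For the failure probability, I would use that $\Vert\mathbf{c}^{(N)}\Vert_2 \le \rho_1^{(N)} s^{(N)}$ and $\Vert\mathbf{r}^{(N)}\Vert_2 \le \rho_1^{(N)} s^{(N)}$ by definition, so both exponents in~\eqref{eq:probability of tail bounds on x_tilde and y_tilde} are at least $\tfrac{\delta_N^2}{C_p^2(\rho_1^{(N)})^2} = \tfrac{C^2}{C_p^2}\log(\max\{M_N,N\})$. Hence~\eqref{eq:probability of tail bounds on x_tilde and y_tilde} is bounded above by $2(M_N+N)(\max\{M_N,N\})^{-C^2/C_p^2}$, and since $M_N+N\le 2\max\{M_N,N\}$ this tends to zero whenever $C > C_p$ (e.g. $C = \sqrt{2}\,C_p$).

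The main obstacle is the constraint $\delta\in(0,1]$ required by Theorem~\ref{thm:deviation of scaling factors}: the choice $\delta_N = C\rho_1^{(N)}\sqrt{\log(\max\{M_N,N\})}$ may exceed $1$ when the prescribed sums are highly nonuniform, in which case $\rho_1^{(N)}$ does not decay and the probabilistic argument above breaks down. I would resolve this with a case split. When $\delta_N\le 1$, the argument above applies verbatim, and the sequence element $(\widetilde{\mathbf{x}}^{(N)},\widetilde{\mathbf{y}}^{(N)})$ is taken to be the pair furnished by Theorem~\ref{thm:deviation of scaling factors}, which is already compared against the $\ell_1$-normalized pair $(\mathbf{x}^{(N)},\mathbf{y}^{(N)})$ appearing in $\mathcal{E}_N$. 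When $\delta_N>1$, observe first that $\rho_2^{(N)}\ge 1$ always, since $s^{(N)} = \sum_i r_i^{(N)} \ge M_N\min_i r_i^{(N)}$; consequently the target rate obeys $\rho_1^{(N)}\rho_2^{(N)}\sqrt{\log(\max\{M_N,N\})} > \rho_2^{(N)}/C \ge 1/C$, i.e. it is bounded below.

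In this second regime I would invoke Lemma~\ref{lem:boundedness of scaling factors} instead of concentration: taking $(\widetilde{\mathbf{x}}^{(N)},\widetilde{\mathbf{y}}^{(N)})$ to be the $\ell_1$-normalized scaling factors of $\widetilde{A}^{(N)}$, which obey the same entrywise bounds~\eqref{eq:x_i and y_i bounds} as $(\mathbf{x}^{(N)},\mathbf{y}^{(N)})$ (both $\widetilde{A}^{(N)}$ and $A^{(N)} = \mathbb{E}[\widetilde{A}^{(N)}]$ have entries in $[a,b]$), the ratios $\widetilde{x}_i^{(N)}/x_i^{(N)}$ and $\widetilde{y}_j^{(N)}/y_j^{(N)}$ all lie in $[(a/b)^{3/2},(b/a)^{3/2}]$, giving the deterministic bound $\mathcal{E}_N\le (b/a)^{3/2}-1$. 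Since the target rate exceeds $1/C$ here, this deterministic constant is itself $\mathcal{O}(\rho_1^{(N)}\rho_2^{(N)}\sqrt{\log(\max\{M_N,N\})})$, so the bound holds with probability one. Defining the sequence $\{(\widetilde{\mathbf{x}}^{(N)},\widetilde{\mathbf{y}}^{(N)})\}$ piecewise according to the two cases and taking the larger of the two constants then yields $\mathcal{E}_N = \mathcal{O}_{\text{w.h.p}}(\rho_1^{(N)}\rho_2^{(N)}\sqrt{\log(\max\{M_N,N\})})$, completing the argument.
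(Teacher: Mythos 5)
Your proposal is correct and follows essentially the same route as the paper's proof: apply Theorem~\ref{thm:deviation of scaling factors} with $\delta_N \propto \rho_1^{(N)}\sqrt{\log(\max\{M_N,N\})}$, verify the error and probability bounds, and handle the regime where $\delta_N$ would exceed $1$ by falling back on the deterministic bounds of Lemma~\ref{lem:boundedness of scaling factors} together with the observation that $\rho_2^{(N)}\ge 1$ keeps the target rate bounded below. The only cosmetic difference is the exact placement of the case split (on $\delta_N\le 1$ rather than on $\rho_1^{(N)}\rho_2^{(N)}\sqrt{\log(\max\{M_N,N\})}\le 1/\sqrt{2C_p}$), which changes nothing of substance.
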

The proof can be found in Section~\ref{sec:proof of theorem on convergence rate of scaling factors} and is largely based on a direct application of Theorem~\ref{thm:deviation of scaling factors} using an appropriate $\delta$.

To exemplify Theorem~\ref{thm:relative entry-wise convergence of scaling factors}, let us consider the setting of {doubly-stochastic} matrix scaling, namely $M_N=N$, and $r_i^{(N)}=c_j^{(N)}=1$ for all $i\in[M]$, $j\in [N]$. According to~\eqref{eq:rho_1 and rho_2 def} we have $\rho_1^{(N)} = 1/\sqrt{N}$ and $\rho_2^{(N)} = 1$. Hence, Theorem~\ref{thm:relative entry-wise convergence of scaling factors} asserts that there exists a sequence of scaling factors $\{(\widetilde{\mathbf{x}}^{(N)},\widetilde{\mathbf{y}}^{(N)})\}$ of $\{\widetilde{A}^{(N)}\}$, such that $\mathcal{E}_N(\widetilde{\mathbf{x}}^{(N)},\widetilde{\mathbf{y}}^{(N)}) = \mathcal{O}_{\text{w.h.p}} (\sqrt{{\log N}/{N}})$.
Similarly, it is easy to verify that the same convergence rate of $\mathcal{O}_{\text{w.h.p}} \left(\sqrt{{\log N}/{N}}\right)$ holds whenever $M_N$ grows proportionally to $N$ and $\max_i r_i^{(N)} / \min_i r_i^{(N)} \leq C$, $\max_j c_j^{(N)}/\min_j c_j^{(N)} \leq C$, for some universal constant $C$. If instead $M_N$ grows disproportionately to $N$, the convergence rate of  $(\widetilde{\mathbf{x}}^{(N)},\widetilde{\mathbf{y}}^{(N)})$ to $({\mathbf{x}}^{(N)},{\mathbf{y}}^{(N)})$ is dominated by the minimum between $M_N$ and $N$, as described in the next corollary of Theorem~\ref{thm:relative entry-wise convergence of scaling factors}.
\begin{cor} \label{cor:relative convergence of scaling factors}
Suppose that the conditions in Theorem~\ref{thm:relative entry-wise convergence of scaling factors} hold, and in addition $\max_i r_i^{(N)} / \min_i r_i^{(N)} \leq C$, $\max_j c_j^{(N)} /\min_j c_{j}^{(N)} \leq C$, for all indices $N$ and some universal constant $C$ (independent of $N$). Then,
\begin{equation}
    \mathcal{E}_N(\widetilde{\mathbf{x}}^{(N)},\widetilde{\mathbf{y}}^{(N)}) = \mathcal{O}_{\text{w.h.p}}\left(\sqrt{\frac{\log (\max \{ M_N, N \}) }{\min \{ M_N, N \}}}\right). \label{eq:scaling factors convergence rate for equal row and column sums}
\end{equation}
\end{cor}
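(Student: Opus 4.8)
The plan is to derive the corollary directly from Theorem~\ref{thm:relative entry-wise convergence of scaling factors} by showing that the two comparability hypotheses force the prefactor $\rho_1^{(N)}\rho_2^{(N)}$ to be of order $1/\sqrt{\min\{M_N,N\}}$. Indeed, Theorem~\ref{thm:relative entry-wise convergence of scaling factors} already supplies a sequence of scaling factors with
\[
\mathcal{E}_N(\widetilde{\mathbf{x}}^{(N)},\widetilde{\mathbf{y}}^{(N)}) = \mathcal{O}_{\text{w.h.p}}\left(\rho_1^{(N)}\rho_2^{(N)}\sqrt{\log(\max\{M_N,N\})}\right),
\]
so it remains only to estimate $\rho_1^{(N)}$ and $\rho_2^{(N)}$ under the assumptions $\max_i r_i^{(N)}/\min_i r_i^{(N)} \leq C$ and $\max_j c_j^{(N)}/\min_j c_j^{(N)} \leq C$. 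Throughout I would suppress the superscript $(N)$ and write $r_{\min} = \min_i r_i$, $r_{\max} = \max_i r_i$, with analogous notation for $\mathbf{c}$.

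First I would bound $\rho_1^{(N)}$. Since every entry of $\mathbf{r}$ lies in $[r_{\min}, r_{\max}]$, the elementary estimates $s = \Vert \mathbf{r}\Vert_1 \geq M_N r_{\min}$ and $\Vert \mathbf{r}\Vert_2 \leq \sqrt{M_N}\, r_{\max}$ give
\[
\frac{\Vert \mathbf{r}\Vert_2}{s} \leq \frac{\sqrt{M_N}\, r_{\max}}{M_N r_{\min}} = \frac{1}{\sqrt{M_N}}\cdot\frac{r_{\max}}{r_{\min}} \leq \frac{C}{\sqrt{M_N}},
\]
and symmetrically $\Vert \mathbf{c}\Vert_2/s \leq C/\sqrt{N}$. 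Taking the maximum and using $\max\{1/\sqrt{M_N},1/\sqrt{N}\} = 1/\sqrt{\min\{M_N,N\}}$ yields $\rho_1^{(N)} \leq C/\sqrt{\min\{M_N,N\}}$.

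Next I would bound $\rho_2^{(N)}$, which should turn out to be $\mathcal{O}(1)$. Using $s = \Vert \mathbf{r}\Vert_1 \leq M_N r_{\max}$ one obtains
\[
\frac{s}{M_N r_{\min}} \leq \frac{M_N r_{\max}}{M_N r_{\min}} = \frac{r_{\max}}{r_{\min}} \leq C,
\]
and in the same way $s/(N\, c_{\min}) \leq C$, so that $\rho_2^{(N)} \leq C$. Combining the two bounds gives $\rho_1^{(N)}\rho_2^{(N)} \leq C^2/\sqrt{\min\{M_N,N\}}$, and substituting this into the displayed rate from Theorem~\ref{thm:relative entry-wise convergence of scaling factors} produces exactly~\eqref{eq:scaling factors convergence rate for equal row and column sums}.

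The argument is entirely elementary, so I do not anticipate a genuine obstacle; the only point requiring care is the bookkeeping of the constant $C$ and the observation that the comparability hypothesis is precisely what keeps $\rho_2^{(N)}$ bounded, thereby relegating all of the asymptotic decay to $\rho_1^{(N)}$. In effect the hypothesis plays a dual role: it flattens the $\ell_2$-to-$\ell_1$ norm ratio of the prescribed sums, controlling $\rho_1^{(N)}$, while simultaneously keeping the smallest prescribed sum comparable to the average, controlling $\rho_2^{(N)}$.
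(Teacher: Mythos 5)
Your proposal is correct and follows exactly the same route as the paper, which likewise notes that the comparability hypotheses give $\rho_1^{(N)} \leq C \max\{M_N^{-1/2}, N^{-1/2}\}$ and $\rho_2^{(N)} \leq C$ and then invokes Theorem~\ref{thm:relative entry-wise convergence of scaling factors}. Your elementary norm estimates filling in those two bounds are exactly the intended justification.
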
 
The proof follows immediately from the fact that $\rho_1^{(N)} \leq C \max \{ {M_N}^{-1/2} , {N}^{-1/2}\}$ and $\rho_2^{(N)} \leq C$ if $\max_i r_i^{(N)} / \min_i r_i^{(N)} \leq C$ and $\max_j c_j^{(N)}/\min_j c_j^{(N)} \leq C$.

Aside from the setting where the $r_i^{(N)}$'s and $c_j^{(N)}$'s have the same orders of magnitude, Theorem~\ref{thm:relative entry-wise convergence of scaling factors} provides guarantees on the convergence rate of the scaling factors even if some of the $r_i^{(N)}$'s or $c_j^{(N)}$'s grow unbounded with $N$ relative to others. For instance, considering again the setting of doubly-stochastic matrix scaling, we can set a fixed number of the $r_i^{(N)}$'s or $c_j^{(N)}$'s to be $\sqrt{N}$ instead of $1$ (for all indices $N$), without affecting the behavior of $\Vert \mathbf{r}^{(N)} \Vert_2$, $\Vert \mathbf{c}^{(N)} \Vert_2$, and $s^{(N)}$ asymptotically as $N$ grows. Consequently, the convergence rate of $(\widetilde{\mathbf{x}}^{(N)},\widetilde{\mathbf{y}}^{(N)})$ to $({\mathbf{x}}^{(N)},{\mathbf{y}}^{(N)})$ in this case would remain $\mathcal{O}_{\text{w.h.p}}(\sqrt{\log N / N})$.

\subsection{Concentration of $\widetilde{P}$ around $P$ in operator norm} \label{sec:convergence of scaled matrix}
Let $\widetilde{P}^{(N)}$ and ${P}^{(N)}$ be the matrices obtained from $\widetilde{A}^{(N)}$ and $A^{(N)}$, respectively, after scaling them to row sums $\mathbf{r}^{(N)}$ and column sums $\mathbf{c}^{(N)}$, i.e.,
\begin{equation}
    \widetilde{P}^{(N)} = D(\widetilde{\mathbf{x}}^{(N)} )\widetilde{A}^{(N)} D(\widetilde{\mathbf{y}}^{(N)} ), \qquad\qquad P^{(N)} = D({\mathbf{x}}^{(N)} ) A^{(N) } D({\mathbf{y}}^{(N)} ),
\end{equation}
where $(\widetilde{\mathbf{x}}^{(N)},\widetilde{\mathbf{y}}^{(N)})$ is any pair of scaling factors of $\widetilde{A}$. Note that by Theorem~\ref{thm:existence and uniquness of matrix scaling} the matrices $\widetilde{P}^{(N)}$ and ${P}^{(N)}$ are uniquely determined ($\widetilde{P}^{(N)}$ being a random matrix). In addition, we define the quantity
\begin{equation}
      \rho_3^{(N)} = \frac{\sqrt{M_N}\max_i r_i^{(N)} \cdot \sqrt{N} \max_j c_j^{(N)}}{s^{(N)}}.  \label{eq:rho_3 def}
  \end{equation}
We now have the following result, which provides an upper bound on the concentration of $\widetilde{P}^{(N)}$ around ${P}^{(N)}$ in operator norm.
\begin{thm} (Asymptotic concentration of $\widetilde{P}^{(N)}$ around $P^{(N)}$) \label{thm:convergence of scaled matrix in operator norm}
Suppose that for each index $N$ the entries of $\widetilde{A}^{(N)}$ are independent, and $\widetilde{A}_{i,j}^{(N)} \in [a,b]$ a.s. for all $i\in[M_N]$, $j\in[N]$, and some universal positive constants $a,b$ (independent of $N$). 
Then,
\begin{equation}
    \Vert \widetilde{P}^{(N)} - P^{(N)} \Vert_2 
    = \mathcal{O}_{\text{w.h.p}} \left( \rho_1^{(N)} \rho_2^{(N)} \rho_3^{(N)} \sqrt{{\log (\max \{ M_N, N \}) } } \right).
    \label{eq:scaled matrix convergence rate in operator norm}
\end{equation}
\end{thm}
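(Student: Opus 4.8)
The plan is to split $\widetilde{P}^{(N)}-P^{(N)}$ into a term that captures the deviation of the scaling factors and a term that captures the randomness of $\widetilde{A}^{(N)}$, and to bound each separately; throughout I suppress the superscript $(N)$. Since full independence of the entries implies the row/column independence required by Theorem~\ref{thm:deviation of scaling factors}, Theorem~\ref{thm:relative entry-wise convergence of scaling factors} applies and supplies a sequence of scaling factors $(\widetilde{\mathbf{x}},\widetilde{\mathbf{y}})$ of $\widetilde{A}$ with $\mathcal{E}_N(\widetilde{\mathbf{x}},\widetilde{\mathbf{y}}) = \mathcal{O}_{\text{w.h.p}}(\rho_1\rho_2\sqrt{\log(\max\{M_N,N\})})$, measured against the normalized pair $(\mathbf{x},\mathbf{y})$ of Lemma~\ref{lem:boundedness of scaling factors}. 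Because $\widetilde{P}$ and $P$ are invariant under the scaling ambiguity $(\mathbf{x},\mathbf{y})\mapsto(\alpha\mathbf{x},\alpha^{-1}\mathbf{y})$ (Theorem~\ref{thm:existence and uniquness of matrix scaling}), I am free to use precisely these convenient pairs. Inserting the intermediate matrix $D(\mathbf{x})\widetilde{A}D(\mathbf{y})$ and applying the triangle inequality gives
\begin{equation}
\Vert\widetilde{P} - P\Vert_2 \le \underbrace{\Vert D(\widetilde{\mathbf{x}})\widetilde{A}\,D(\widetilde{\mathbf{y}}) - D(\mathbf{x})\widetilde{A}\,D(\mathbf{y})\Vert_2}_{T_1} + \underbrace{\Vert D(\mathbf{x})(\widetilde{A} - A)D(\mathbf{y})\Vert_2}_{T_2},
\end{equation}
where $T_2$ involves the centered matrix $\widetilde{A}-A$ since $A=\mathbb{E}[\widetilde{A}]$.

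To bound $T_1$, I split it as $\Vert(D(\widetilde{\mathbf{x}})-D(\mathbf{x}))\widetilde{A}D(\widetilde{\mathbf{y}})\Vert_2+\Vert D(\mathbf{x})\widetilde{A}(D(\widetilde{\mathbf{y}})-D(\mathbf{y}))\Vert_2$ and use submultiplicativity. The diagonal differences satisfy $\Vert D(\widetilde{\mathbf{x}})-D(\mathbf{x})\Vert_2=\max_i|\widetilde{x}_i-x_i|\le(\max_i x_i)\mathcal{E}_N$ and analogously for $\mathbf{y}$, while $\Vert D(\widetilde{\mathbf{y}})\Vert_2\le 2\max_j y_j$ once $\mathcal{E}_N\le 1$ (which occurs with high probability for large $N$). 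Since every entry of $\widetilde{A}$ is at most $b$, the crude bound $\Vert\widetilde{A}\Vert_2\le\Vert\widetilde{A}\Vert_F\le b\sqrt{M_N N}$ holds deterministically. Finally, Lemma~\ref{lem:boundedness of scaling factors} gives $\max_i x_i\le(\sqrt{b}/a)\max_i r_i/\sqrt{s}$ and $\max_j y_j\le(\sqrt{b}/a)\max_j c_j/\sqrt{s}$, whence $(\max_i x_i)(\max_j y_j)\le(b/a^2)\max_i r_i\max_j c_j/s$. Multiplying these factors and recalling the definition~\eqref{eq:rho_3 def} of $\rho_3$ yields $T_1\le(Cb^2/a^2)\mathcal{E}_N\rho_3$ for an absolute constant $C$, so that $T_1=\mathcal{O}_{\text{w.h.p}}(\rho_1\rho_2\rho_3\sqrt{\log(\max\{M_N,N\})})$, which is already the claimed rate.

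For $T_2$, the same magnitude bounds give $T_2\le(b/a^2)(\max_i r_i\max_j c_j/s)\Vert\widetilde{A}-A\Vert_2=(b/a^2)(\rho_3/\sqrt{M_N N})\Vert\widetilde{A}-A\Vert_2$. Here I invoke a standard high-probability operator-norm bound for a matrix with independent, mean-zero, uniformly bounded entries, e.g. the matrix Bernstein inequality applied to $\widetilde{A}-A=\sum_{i,j}(\widetilde{A}_{i,j}-A_{i,j})\mathbf{e}_i\mathbf{e}_j^\top$, which yields $\Vert\widetilde{A}-A\Vert_2=\mathcal{O}_{\text{w.h.p}}(\sqrt{\max\{M_N,N\}\log(\max\{M_N,N\})})$; this is the one step that genuinely requires full independence of the entries. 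Consequently $T_2=\mathcal{O}_{\text{w.h.p}}(\rho_3\sqrt{\log(\max\{M_N,N\})}/\sqrt{\min\{M_N,N\}})$. To verify that this is dominated by the bound on $T_1$, note that Cauchy–Schwarz gives $\Vert\mathbf{r}\Vert_2\ge s/\sqrt{M_N}$ and $\Vert\mathbf{c}\Vert_2\ge s/\sqrt{N}$, so $\rho_1\ge 1/\sqrt{\min\{M_N,N\}}$, while $s\ge M_N\min_i r_i$ and $s\ge N\min_j c_j$ force $\rho_2\ge 1$; hence $\rho_1\rho_2\ge 1/\sqrt{\min\{M_N,N\}}$ and therefore $T_2=\mathcal{O}_{\text{w.h.p}}(\rho_1\rho_2\rho_3\sqrt{\log(\max\{M_N,N\})})$ as well. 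Intersecting the two high-probability events (each of probability tending to $1$) and combining through the triangle inequality establishes~\eqref{eq:scaled matrix convergence rate in operator norm}.

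The main obstacle is the control of the randomness term $T_2$: the crude deterministic bound $\Vert\widetilde{A}\Vert_2\le b\sqrt{M_N N}$ that suffices for $T_1$ is far too lossy for the centered matrix $\widetilde{A}-A$, and one must instead exploit cancellation through a genuine random-matrix operator-norm concentration inequality. This is exactly why the hypothesis is strengthened from the row/column independence of Theorem~\ref{thm:deviation of scaling factors} to full independence. A secondary point requiring care is the verification that $T_2$ does not dominate, namely the elementary inequalities $\rho_1\ge 1/\sqrt{\min\{M_N,N\}}$ and $\rho_2\ge 1$, which guarantee that the scaling-factor term $T_1$ is always at least as large as the randomness term and therefore dictates the final rate.
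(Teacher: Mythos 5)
Your overall strategy is the same as the paper's: telescope $\widetilde{P}^{(N)}-P^{(N)}$ into scaling-factor-error terms plus the centered term $D(\mathbf{x})(\widetilde{A}-A)D(\mathbf{y})$, control the former via Theorem~\ref{thm:relative entry-wise convergence of scaling factors} together with the crude bound $\Vert\widetilde{A}\Vert_2\le\Vert\widetilde{A}\Vert_F\le b\sqrt{M_N N}$ and Lemma~\ref{lem:boundedness of scaling factors}, control the latter via an operator-norm concentration inequality for matrices with independent bounded entries, and absorb everything into the target rate using $\rho_1^{(N)}\ge 1/\sqrt{\min\{M_N,N\}}$ and $\rho_2^{(N)}\ge 1$. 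The paper uses the four-term bilinear expansion with $\eta=\widetilde{\mathbf{x}}-\mathbf{x}$, $\xi=\widetilde{\mathbf{y}}-\mathbf{y}$ and Theorem 4.4.5 of Vershynin rather than a two-term telescoping and matrix Bernstein, but these are minor differences; your $T_2$ bound, though weaker by a $\sqrt{\log}$ factor, still suffices because of the slack $\rho_1^{(N)}\rho_2^{(N)}\ge 1/\sqrt{\min\{M_N,N\}}$.

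There is, however, one genuine gap: the assertion that $\mathcal{E}_N(\widetilde{\mathbf{x}},\widetilde{\mathbf{y}})\le 1$ ``occurs with high probability for large $N$,'' which you need in order to write $\Vert D(\widetilde{\mathbf{y}})\Vert_2\le 2\max_j y_j$. Theorem~\ref{thm:relative entry-wise convergence of scaling factors} only gives $\mathcal{E}_N=\mathcal{O}_{\text{w.h.p}}\bigl(\rho_1^{(N)}\rho_2^{(N)}\sqrt{\log(\max\{M_N,N\})}\bigr)$, and nothing in the hypotheses of the present theorem forces this quantity to fall below $1$ over the implied constant; for general prescribed row and column sums it can stay bounded away from zero or diverge. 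If instead you bound $\max_j\widetilde{y}_j\le(1+\mathcal{E}_N)\max_j y_j$ without that assumption, your $T_1$ acquires a term of order $\mathcal{E}_N^2\rho_3^{(N)}$, which is exactly the paper's quadratic cross term $\Vert D(\eta)\widetilde{A}D(\xi)\Vert_2$ and is \emph{not} dominated by $\rho_1^{(N)}\rho_2^{(N)}\rho_3^{(N)}\sqrt{\log(\max\{M_N,N\})}$ when $\rho_1^{(N)}\rho_2^{(N)}\sqrt{\log(\max\{M_N,N\})}>1$. The paper closes this hole with a case split: for such indices it abandons the telescoping entirely and uses the deterministic bound $\Vert\widetilde{P}^{(N)}-P^{(N)}\Vert_2\le\Vert\widetilde{P}^{(N)}\Vert_F+\Vert P^{(N)}\Vert_F\le 2(b/a)^2\rho_3^{(N)}$ (from the entrywise bound $\widetilde{x}_i\widetilde{A}_{i,j}\widetilde{y}_j\le(b/a)^2 r_ic_j/s$), which is then trivially within the claimed rate. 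Adding this case repairs your proof; the rest of the argument is sound.
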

The proof can be found in Section~\ref{sec:proof of theorem on convergence in operator norm}, and is based on Theorem~\ref{thm:relative entry-wise convergence of scaling factors} and the concentration of $\widetilde{A}^{(N)}$ around $A^{(N)}$ in operator norm.
To exemplify Theorem~\ref{thm:convergence of scaled matrix in operator norm}, we consider again the case of doubly-stochastic matrix scaling, where we have $\rho_1^{(N)} = 1/\sqrt{N}$, and $\rho_2^{(N)} = \rho_3^{(N)} = 1$. Therefore, $ \Vert \widetilde{P}^{(N)} - P^{(N)} \Vert_2 = \mathcal{O}_{\text{w.h.p}} ( \sqrt{{\log N}/{{N}}} )$.
Note that since $P^{(N)}$ is doubly-stochastic, it follows that $\Vert P^{(N)} \Vert_2 = 1$ (see~\cite{horn2012matrix}).
In the more general case where the prescribed row and column sums are not $1$, the operator norm of $P^{(N)}$ can converge to zero or grow unbounded with $N$, depending on the asymptotic behavior of the prescribed row and column sums. Consequently, we also consider the normalized error $\Vert \widetilde{P}^{(N)} - P^{(N)} \Vert_2/\Vert P^{(N)} \Vert_2$, which is the subject of the following Corollary of Theorem~\ref{thm:convergence of scaled matrix in operator norm} for the case that all prescribed row sums and all prescribed column sums admit the same behaviors with $N$.
\begin{cor} \label{cor:relative convergence of scaled matrix in operator norm}
Suppose that the conditions in Theorem~\ref{thm:convergence of scaled matrix in operator norm} hold, and in addition $\max_i r_i^{(N)} / \min_i r_i^{(N)} \leq C$, $\max_j c_j^{(N)} /\min_j c_{j}^{(N)} \leq C$, for all indices $N$ and some universal constant $C$ (independent of $N$). Then,
\begin{equation}
    \frac{\Vert \widetilde{P}^{(N)} - P^{(N)} \Vert_2 }{\Vert P^{(N)} \Vert_2} = \mathcal{O}_{\text{w.h.p}}\left(\sqrt{\frac{\log (\max \{ M_N, N \}) }{\min \{ M_N, N \}}}\right). \label{eq:scaled matrix convergence rate in operator norm for equal row and column sums}
\end{equation}
\end{cor}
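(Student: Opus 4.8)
The plan is to obtain the bound by dividing the numerator estimate from Theorem~\ref{thm:convergence of scaled matrix in operator norm} by a deterministic lower bound on the denominator $\Vert P^{(N)}\Vert_2$, and then to simplify everything using the comparable-sums hypothesis. Since the hypotheses of Theorem~\ref{thm:convergence of scaled matrix in operator norm} are assumed, I may start from
\[
\Vert \widetilde{P}^{(N)} - P^{(N)}\Vert_2 = \mathcal{O}_{\text{w.h.p}}\left(\rho_1^{(N)}\rho_2^{(N)}\rho_3^{(N)}\sqrt{\log(\max\{M_N,N\})}\right),
\]
so that the corollary reduces to verifying the deterministic inequality $\rho_1^{(N)}\rho_2^{(N)}\rho_3^{(N)}/\Vert P^{(N)}\Vert_2 = \mathcal{O}(1/\sqrt{\min\{M_N,N\}})$.

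The key new ingredient is a lower bound on $\Vert P^{(N)}\Vert_2$. Because $P^{(N)}$ is entrywise nonnegative and the sum of all of its entries equals $s^{(N)}$ (its row sums being $\mathbf{r}^{(N)}$), testing the operator norm against the all-ones vectors $\mathbf{1}_{M_N}$ and $\mathbf{1}_N$ gives
\[
\Vert P^{(N)}\Vert_2 \geq \frac{\mathbf{1}_{M_N}^\top P^{(N)}\mathbf{1}_N}{\Vert \mathbf{1}_{M_N}\Vert_2\,\Vert \mathbf{1}_N\Vert_2} = \frac{s^{(N)}}{\sqrt{M_N N}}.
\]
This is the only place where the structure of $P^{(N)}$ (as a scaled matrix with known row and column sums), rather than that of a generic matrix, enters, and it lets the denominator be controlled with no probabilistic input.

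It then remains to bound the three $\rho$ factors under the assumption $\max_i r_i^{(N)}/\min_i r_i^{(N)} \le C$ and $\max_j c_j^{(N)}/\min_j c_j^{(N)} \le C$. As already noted in the proof of Corollary~\ref{cor:relative convergence of scaling factors}, this assumption yields $\rho_1^{(N)} \le C\max\{M_N^{-1/2}, N^{-1/2}\} = C/\sqrt{\min\{M_N,N\}}$ and $\rho_2^{(N)} \le C$. The one remaining computation is to group $\rho_3^{(N)}$ with the reciprocal of the lower bound above: using $\Vert P^{(N)}\Vert_2^{-1} \le \sqrt{M_N N}/s^{(N)}$ together with the definition~\eqref{eq:rho_3 def} of $\rho_3^{(N)}$,
\[
\frac{\rho_3^{(N)}}{\Vert P^{(N)}\Vert_2} \le \rho_3^{(N)}\,\frac{\sqrt{M_N N}}{s^{(N)}} = \frac{M_N N \max_i r_i^{(N)}\,\max_j c_j^{(N)}}{(s^{(N)})^2}.
\]
Since comparable row sums force $M_N\max_i r_i^{(N)} \le C\,\Vert \mathbf{r}^{(N)}\Vert_1 = C\,s^{(N)}$ and likewise $N\max_j c_j^{(N)} \le C\,s^{(N)}$, the right-hand side is at most $C^2$. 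Multiplying the three factors gives $\rho_1^{(N)}\rho_2^{(N)}\rho_3^{(N)}/\Vert P^{(N)}\Vert_2 \le C^4/\sqrt{\min\{M_N,N\}}$, and substituting this back into the numerator estimate produces the claimed rate.

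I expect no serious obstacle: once the lower bound $\Vert P^{(N)}\Vert_2 \ge s^{(N)}/\sqrt{M_N N}$ is in hand — the only genuinely non-routine step — the rest is bookkeeping with the definitions of $\rho_1^{(N)},\rho_2^{(N)},\rho_3^{(N)}$. The only point requiring mild care is that both the $\mathcal{O}_{\text{w.h.p}}$ constant from Theorem~\ref{thm:convergence of scaled matrix in operator norm} and the denominator bound are deterministic, so dividing preserves the $\mathcal{O}_{\text{w.h.p}}$ statement on the same high-probability event.
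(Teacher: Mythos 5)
Your proof is correct and follows essentially the same route as the paper: divide the numerator bound from Theorem~\ref{thm:convergence of scaled matrix in operator norm} by a deterministic lower bound on $\Vert P^{(N)}\Vert_2$ and absorb the $\rho$ factors using the comparable-sums hypothesis. The only (immaterial) difference is the choice of test vectors for the denominator — you use $\mathbf{1}_{M_N}^\top P^{(N)}\mathbf{1}_N/\sqrt{M_N N} = s^{(N)}/\sqrt{M_N N}$, while the paper uses the one-sided products to get $\Vert P^{(N)}\Vert_2 \geq \sqrt{\min_i r_i^{(N)}\min_j c_j^{(N)}}$; both lower bounds suffice and yield the same final rate.
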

\begin{proof}
Observe that 
\begin{align}
    \Vert P^{(N)} \Vert_2 &\geq \max\left\{\left\Vert \frac{P^{(N)} \mathbf{1}_N}{\sqrt{N}}\right\Vert_2, \left\Vert \frac{\mathbf{1}^T_{M_N} P^{(N)}}{\sqrt{M_N}}\right\Vert_2\right\} \geq \max\left\{ \frac{\Vert \mathbf{r}^{(N)}\Vert_2}{\sqrt{N}}, \frac{ \Vert\mathbf{c}^{(N)}\Vert_2}{\sqrt{M_N}}\right\} \nonumber \\
    &\geq \sqrt{\min_i r_i^{(N)} \min_j c_j^{(N)}},
\end{align}
where $\mathbf{1}_N$ is the column vector of ones in $\mathbb{R}^{N}$. In addition, it is easy to verify that $\rho_1^{(N)} \leq C \max \{ {M_N}^{-1/2} , {N}^{-1/2}\}$, $\rho_2^{(N)} \leq C$, and 
\begin{equation}
    \frac{\rho_3^{(N)}}{\Vert P^{(N)} \Vert_2} \leq \frac{\max_i r_i^{(N)} \max_j c_j^{(N)}}{\Vert P^{(N)} \Vert_2 \sqrt{\min_i r_i^{(N)} \min_j c_j^{(N)}}} \leq C^2,
\end{equation}
where we used the fact that $s^{(N)} = \sqrt{s^{(N)}} \sqrt{s^{(N)}} \geq \sqrt{M_N \min_i r_i^{(N)}} \sqrt{N \min_j c_j^{(N)}}$. Applying Theorem~\ref{thm:convergence of scaled matrix in operator norm} and using all of the above concludes the proof.
\end{proof}.

\section{Numerical examples} \label{sec:numerical experiments}
We now exemplify our results in several simulations. In all of our experiments, the matrix $A^{(N)}$ was generated by sampling its entries independently and uniformly from $[1.5,2.5]$, and $\widetilde{A}_{i,j}^{(N)}$ were sampled independently and uniformly from $[A_{i,j}^{(N)} - 0.5, A_{i,j}^{(N)}+0.5]$ for all $i\in[M_N]$ and $j\in [N]$. Then, the Sinkhorn-Knopp algorithm~\cite{sinkhorn1967concerning,knight2008sinkhorn} was applied to both $A^{(N)}$ and $\widetilde{A}^{(N)}$, where the algorithm' iterations were terminated once the row and column sums of the scaled matrices reached their targets up to an error of $10^{-12}$. The resulting pairs of scaling factors were normalized so that $\Vert \widetilde{\mathbf{x}}^{(N)} \Vert_1 = \Vert \widetilde{\mathbf{y}}^{(N)} \Vert_1$ and $\Vert {\mathbf{x}}^{(N)} \Vert_1 = \Vert {\mathbf{y}}^{(N)} \Vert_1$. This process was repeated $20$ times (each time for a different realization of $A^{(N)}$ and $\widetilde{A}^{(N)}$) and the error measures appearing in the left-hand sides of~\eqref{eq:E_mathcal convergence rate} and~\eqref{eq:scaled matrix convergence rate in operator norm for equal row and column sums} were computed and averaged over the $20$ randomized trials.

Figure~\ref{fig:convergence of scaling factors} depicts the behavior of the empirical error $\mathcal{E}_N(\widetilde{\mathbf{x}}^{(N)},\widetilde{\mathbf{y}}^{(N)})$ (see~\eqref{eq:E_mathcal def}) as a function of $N$ in several scenarios. Specifically, Figure~\ref{fig:convergence of scaling factors doubly stochastic} exemplifies the scenario of doubly-stochastic matrix scaling, i.e., $M_N = N$ and $r_i^{(N)} = c_j^{(N)} = 1$ for all $i\in[M]$ and $j\in[N]$, in which case Corollary~\ref{cor:relative convergence of scaling factors} guarantees that $\mathcal{E}_N(\widetilde{\mathbf{x}}^{(N)},\widetilde{\mathbf{y}}^{(N)}) = \mathcal{O}_{\text{w.h.p}} (N^{-1/2}\sqrt{\log N})$. Figure~\ref{fig:convergence of scaling factors rectangular matrix and random sums} illustrates the case of a rectangular matrix with $M_N = 3N$, where the prescribed row and column sums were sampled independently and uniformly from $[0.1,1]$ and normalized to sum to $1$. In this case, since $M_N$ is proportional to $N$, and in addition $\max_i r_i^{(N)} / \min_i r_i^{(N)} \leq 10$, $\max_j c_j^{(N)} / \min_j c_j^{(N)} \leq 10$, Corollary~\ref{cor:relative convergence of scaling factors} again dictates that $\mathcal{E}_N(\widetilde{\mathbf{x}}^{(N)},\widetilde{\mathbf{y}}^{(N)}) = \mathcal{O}_{\text{w.h.p}} (N^{-1/2} \sqrt{\log N})$ as for the doubly-stochastic case. Figure~\ref{fig:convergence of scaling factors rectangular matrix} illustrates the scenario of a rectangular matrix with $M_N = 10\sqrt{N}$ and $r_i^{(N)} = N$, $c_j^{(N)} = M_N$, for all $i\in [M]$ and $j\in[N]$. In this case it follows from Corollary~\ref{cor:relative convergence of scaling factors} that $\mathcal{E}_N(\widetilde{\mathbf{x}}^{(N)},\widetilde{\mathbf{y}}^{(N)}) = \mathcal{O}_{\text{w.h.p}} (N^{-1/4} \sqrt{\log N})$. It is evident from Figures~\ref{fig:convergence of scaling factors doubly stochastic}, \ref{fig:convergence of scaling factors rectangular matrix and random sums}, \ref{fig:convergence of scaling factors rectangular matrix} that the asymptotic bound in Theorem~\ref{thm:relative entry-wise convergence of scaling factors} agrees very well with the experimental results, suggesting that this bound is tight for the considered scenarios, and in particular that the factor $\sqrt{\log(N)}$ in the corresponding bounds is necessary.   

\begin{figure} 
  \centering
  	{
  	\subfloat[$M_N = N$, $r_i^{(N)} = c_j^{(N)} = 1$]  
  	{
    \includegraphics[width=0.3\textwidth]{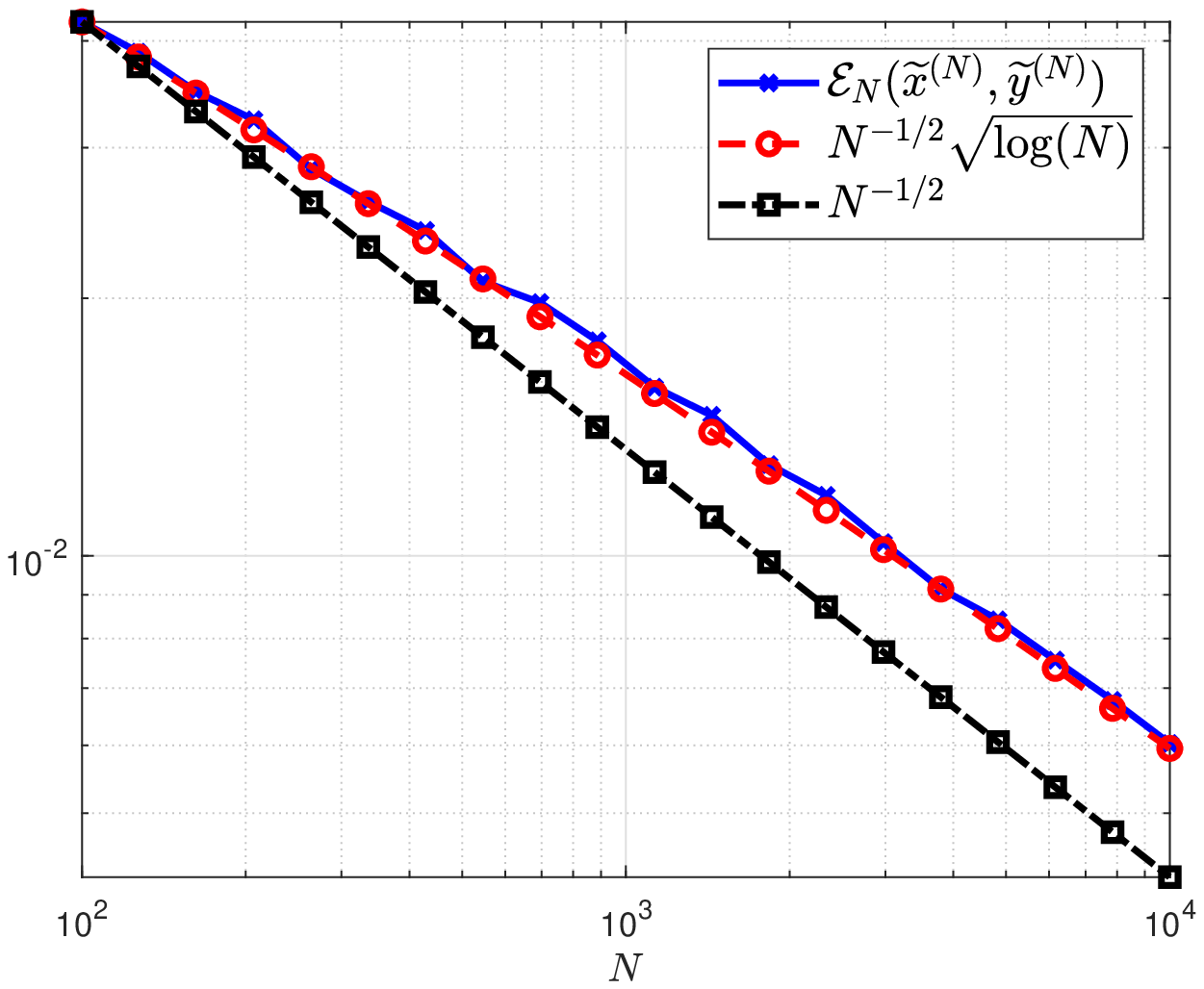} \label{fig:convergence of scaling factors doubly stochastic}
    }
    \subfloat[$M_N = 3N$, $r_i,c_j$ are random]  
  	{
    \includegraphics[width=0.3\textwidth]{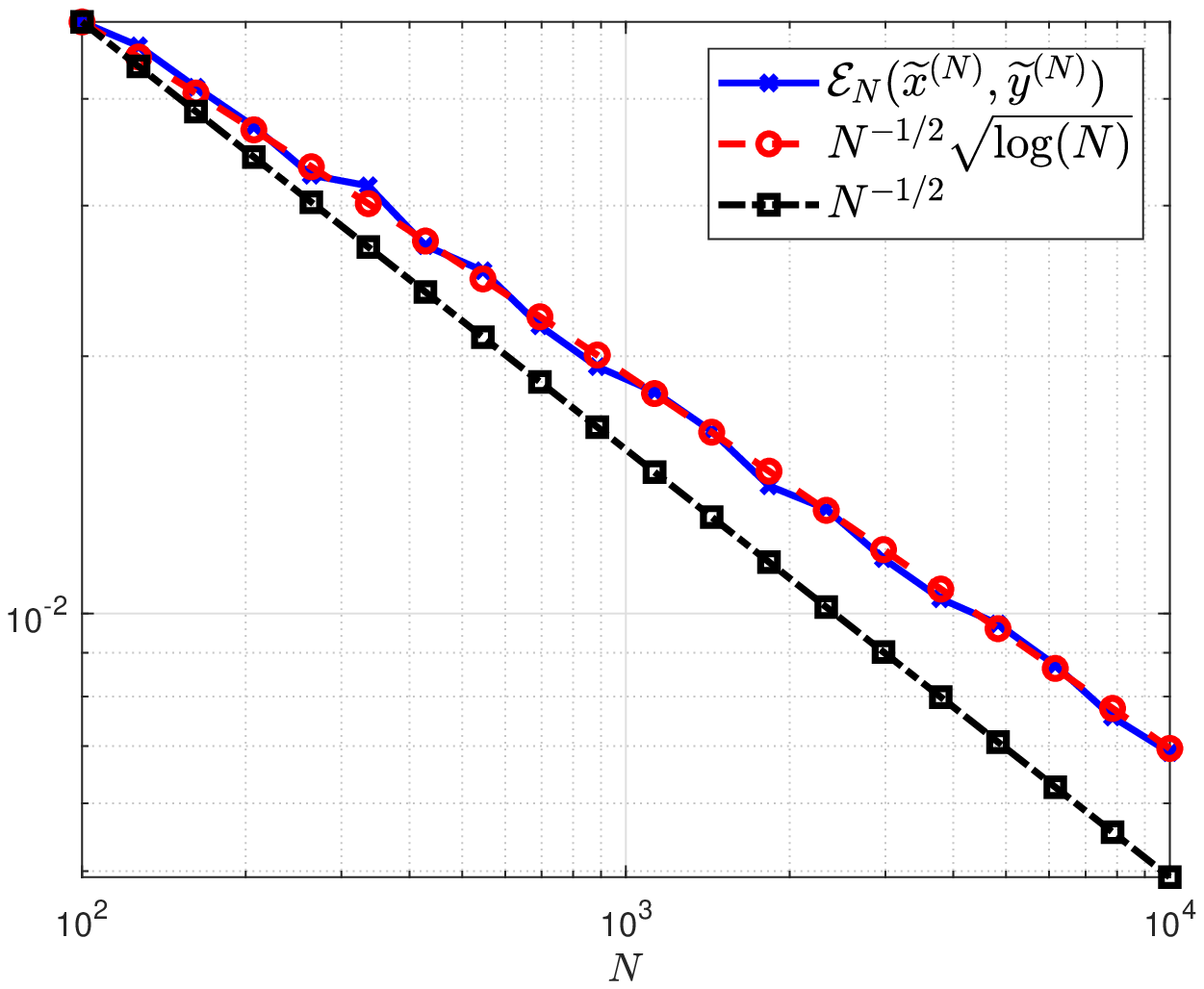} \label{fig:convergence of scaling factors rectangular matrix and random sums}
    }
    \subfloat[$M_N=10\sqrt{N}$, $r_i=N$, $c_j=M_N$]  
  	{
    \includegraphics[width=0.3\textwidth]{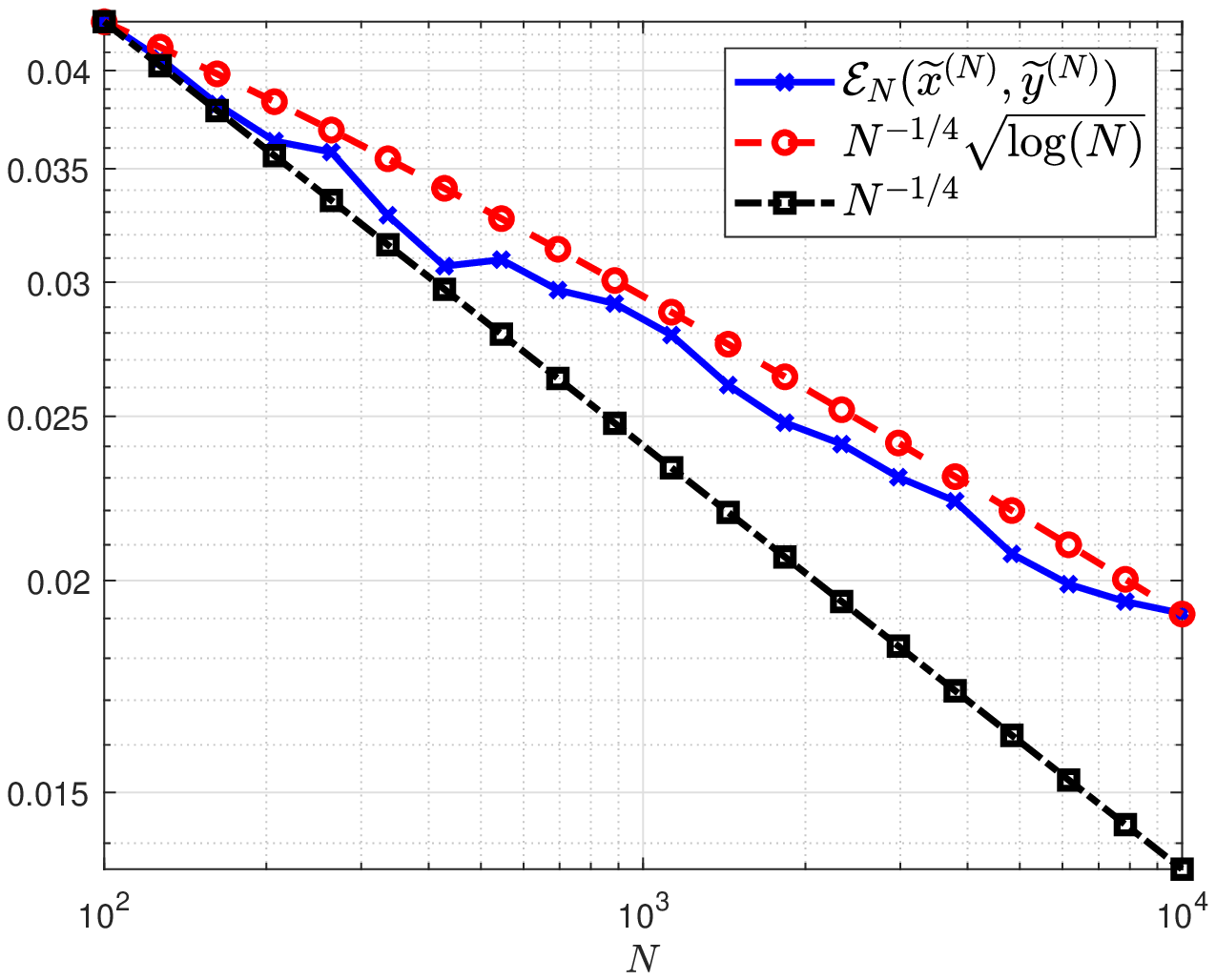} \label{fig:convergence of scaling factors rectangular matrix}
    }
    }
    \caption
    {Empirical values of $\mathcal{E}_N(\widetilde{\mathbf{x}}^{(N)},\widetilde{\mathbf{y}}^{(N)})$ as function of $N$ in log-log scale, compared to the corresponding bounds from Corollary~\ref{cor:relative convergence of scaling factors}. Figure~\ref{fig:convergence of scaling factors doubly stochastic} corresponds to $M_N = N$ and $r_i^{(N)} = c_j^{(N)} = 1$ for all $i\in[M]$ and $j\in[N]$. Figure~\ref{fig:convergence of scaling factors rectangular matrix and random sums} corresponds to $M_N = 3N$, and $\{r_i^{(N)}\}, \{c_j^{(N)}\}$ sampled independently and uniformly from $[0.1,1]$ and normalized to sum to $1$. Figure~\ref{fig:convergence of scaling factors rectangular matrix} corresponds to $M_N = 10\sqrt{N}$ and $r_i^{(N)} = N, \; c_j^{(N)} = M_N$ for all $i\in[M]$ and $j\in[N]$. } \label{fig:convergence of scaling factors}
    \end{figure} 
    
    Figure~\ref{fig:convergence of scaled matrix} shows the behavior of the empirical error $\Vert \widetilde{P}^{(N)} - P^{(N)} \Vert_2 / \Vert P^{(N)} \Vert_2$ as a function of $N$ for the same scenarios as in Figure~\ref{fig:convergence of scaling factors}. For these scenarios, the rates that govern the bounds on $\Vert \widetilde{P}^{(N)} - P^{(N)} \Vert_2 / \Vert P^{(N)} \Vert_2$ according to Corollary~\ref{cor:relative convergence of scaled matrix in operator norm} are the same as those for $\mathcal{E}_N(\widetilde{\mathbf{x}}^{(N)},\widetilde{\mathbf{y}}^{(N)})$ from Corollary~\ref{cor:relative convergence of scaling factors} (described previously in the context of Figure~\ref{fig:convergence of scaling factors}). In the scenario where $M_N$ grows proportionally to $N$, it is evident from Figures~\ref{fig:convergence of scaled matrix doubly stochastic} and~\ref{fig:convergence of scaled matrix rectangular matrix and random sums} that the bound in Corollary~\ref{cor:relative convergence of scaled matrix in operator norm} is tight up to the factor $\sqrt{\log(N)}$, suggesting that the factor $\sqrt{\log(N)}$ is probably not required in the bound on $\Vert \widetilde{P}^{(N)} - P^{(N)} \Vert_2 / \Vert P^{(N)} \Vert_2$ (in contrast to the bound on $\mathcal{E}_N(\widetilde{\mathbf{x}}^{(N)},\widetilde{\mathbf{y}}^{(N)})$ depicted in Figure~\ref{fig:convergence of scaling factors}). In the scenario where $M_N$ grows disproportionately to $N$, Figure~\ref{fig:convergence of scaling factors rectangular matrix} empirically suggests that the bound in Corollary~\ref{cor:relative convergence of scaled matrix in operator norm} can be improved by a factor of ${\log(N)}$, which would bring the rate in this case to be $\mathcal{O}_{\text{w.h.p}}(N^{-1/4}/\log N)$. Overall, these experiments suggest that the bound in Corollary~\ref{cor:relative convergence of scaled matrix in operator norm} describes the correct behavior of $\Vert \widetilde{P}^{(N)} - P^{(N)} \Vert_2 / \Vert P^{(N)} \Vert_2$ with $N$ up to poly-logarithmic factors.  

\begin{figure} 
  \centering
  	{
  	\subfloat[$M_N = N$, $r_i^{(N)} = c_j^{(N)} = 1$]  
  	{
    \includegraphics[width=0.3\textwidth]{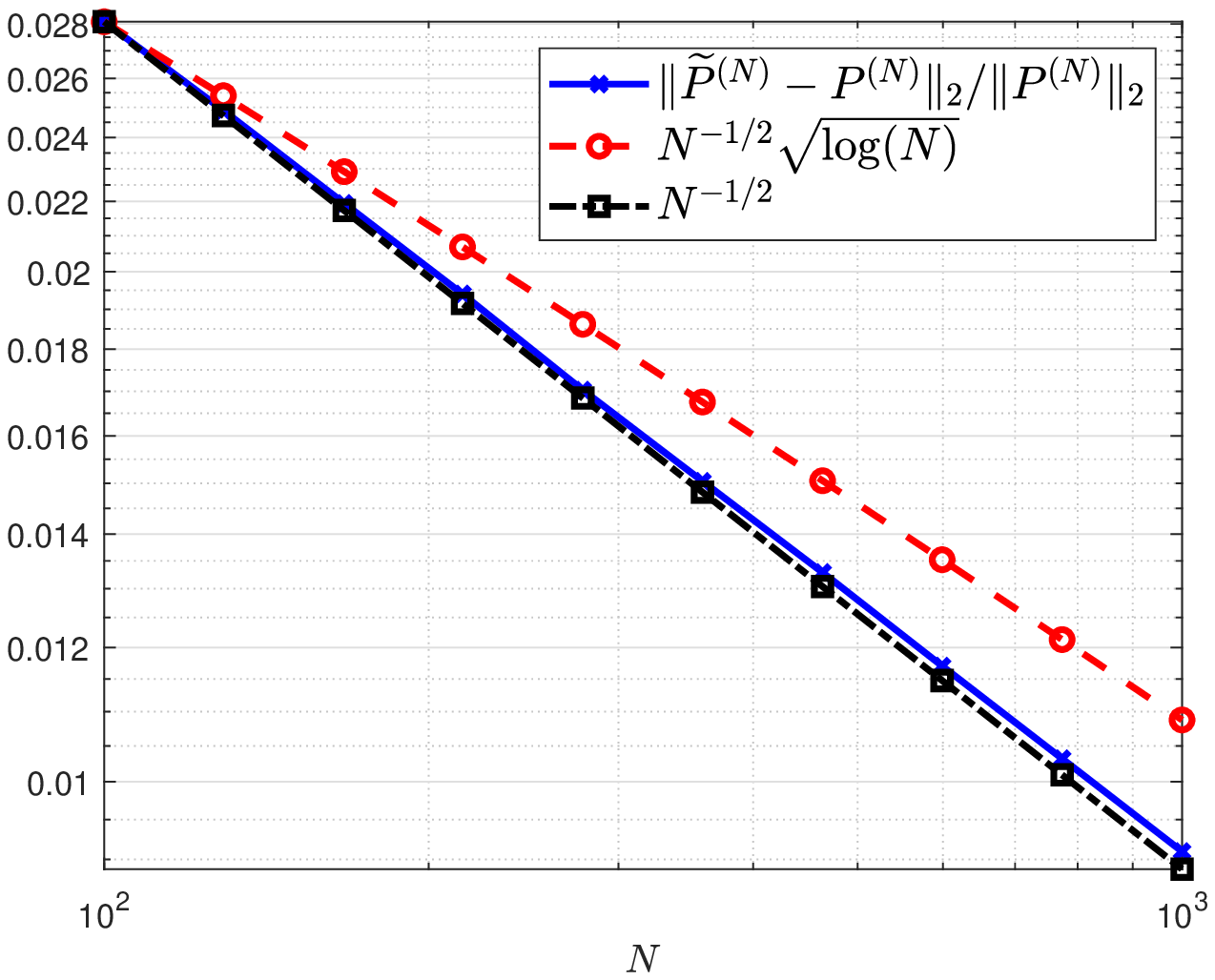} \label{fig:convergence of scaled matrix doubly stochastic}
    }
    \subfloat[$M_N = 3N$, $r_i,c_j$ are random]  
  	{
    \includegraphics[width=0.3\textwidth]{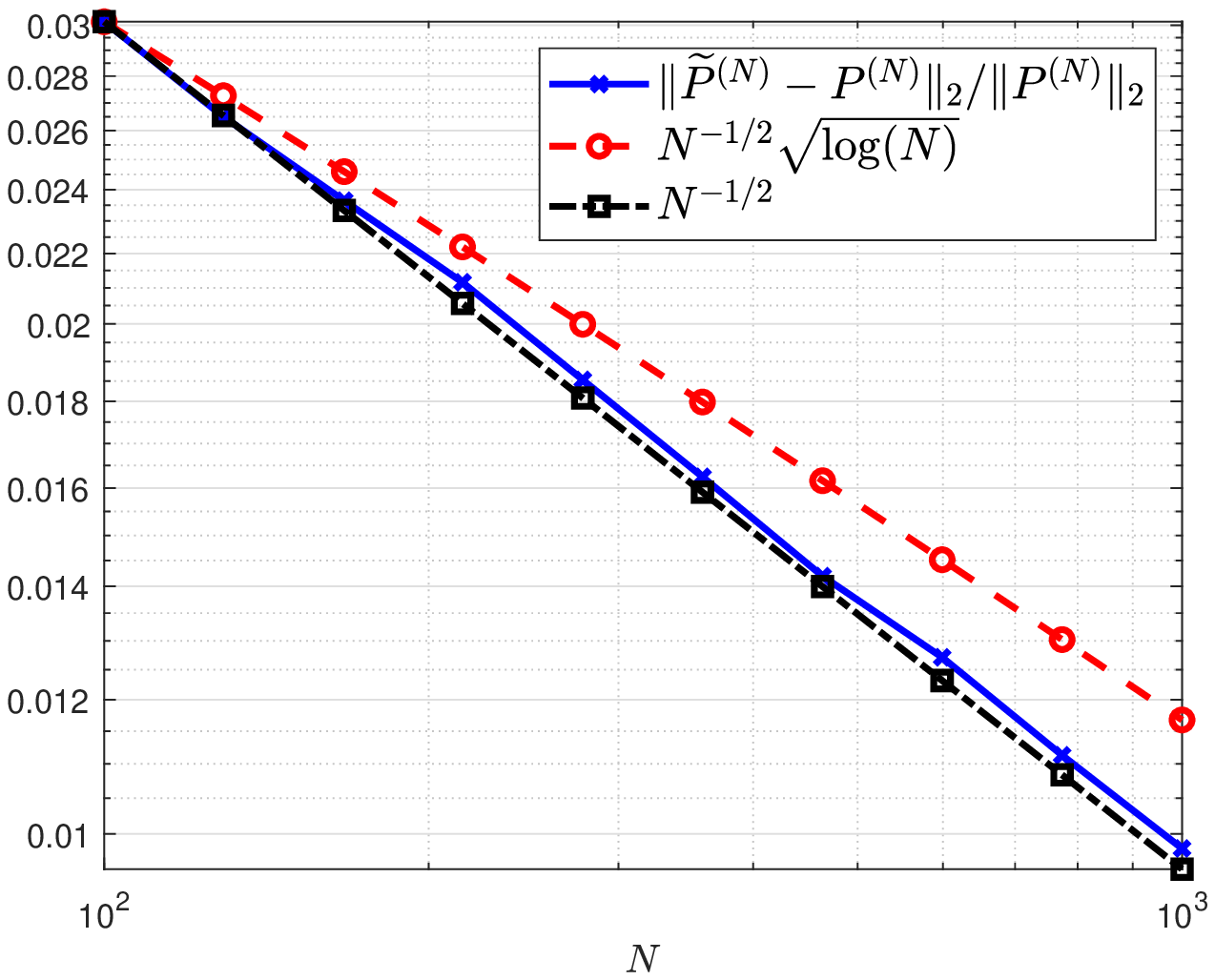} \label{fig:convergence of scaled matrix rectangular matrix and random sums}
    }
    \subfloat[$M_N=10\sqrt{N}$, $r_i=N$, $c_j=M_N$]  
  	{
    \includegraphics[width=0.3\textwidth]{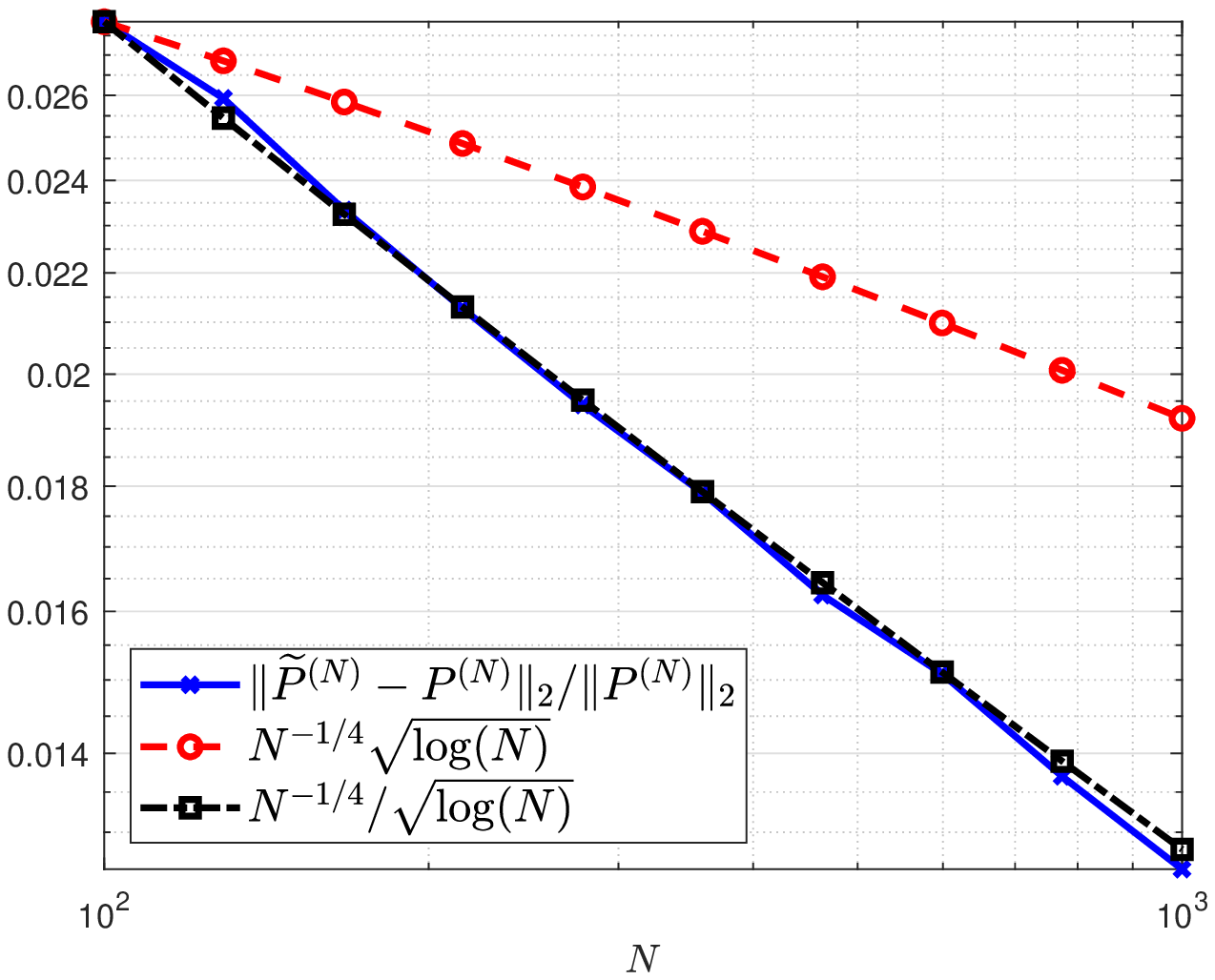} \label{fig:convergence of scaled matrix rectangular matrix}
    }
    }
    \caption
    {Empirical values of $\Vert \widetilde{P}^{(N)} - P^{(N)} \Vert_2 / \Vert P^{(N)} \Vert_2$ as a function of $N$ in log-log scale for the same settings as in Figure~\ref{fig:convergence of scaling factors}, compared to the corresponding bounds from Corollary~\ref{cor:relative convergence of scaled matrix in operator norm}.} \label{fig:convergence of scaled matrix}
    \end{figure} 

\section{Proofs}

\subsection{Proof of Lemma~\ref{lem:boundedness of scaling factors}} \label{sec:proof of boundednes lemma}
Theorem~\ref{thm:existence and uniquness of matrix scaling} guarantees the existence of a pair $(\mathbf{x}^{'},\mathbf{y}^{'})$ of scaling factors of $A$ and states that all possible pairs of scaling factors of $A$ must be of the form $(\alpha \mathbf{x}^{'}, \alpha^{-1} \mathbf{y}^{'})$, for $\alpha>0$. Note that setting $\Vert \alpha \mathbf{x}^{'} \Vert_1 = \Vert \alpha^{-1} \mathbf{y}^{'} \Vert_1$ determines $\alpha$ uniquely, and consequently, there exists a unique pair $(\mathbf{x},\mathbf{y})$ such that $\Vert \mathbf{x} \Vert_1 = \Vert \mathbf{y} \Vert_1$. According to~\eqref{eq:equations for x and y in matrix scaling definition} we have
\begin{equation}
x_i = \frac{r_i}{\sum_{j=1}^N A_{i,j} y_j}, \qquad \qquad
y_j = \frac{c_j}{\sum_{i=1}^M A_{i,j} x_i},
\end{equation}
and since $a \leq A_{i,j} \leq b$ for all $i,j$, it follows that
\begin{equation}
    \frac{r_i}{b \sum_{j=1}^N y_j} \leq x_i \leq \frac{r_i}{a \sum_{j=1}^N y_j}, \qquad \qquad
    \frac{c_j}{b \sum_{i=1}^M x_i} \leq y_j \leq \frac{c_j}{a \sum_{i=1}^M x_i}, \label{x_i and y_j preliminary inequalities}
\end{equation}
for all $i\in [M]$, $j\in [N]$.
Summing the inequalities for $x_i$ in~\eqref{x_i and y_j preliminary inequalities} over $i=1,\ldots,M$, and using $\sum_{i=1}^M x_i = \sum_{j=1}^N y_j$ together with $\sum_{i=1}^M r_i = s$, gives
\begin{equation}
    \sqrt{\frac{s}{b}} \leq \sum_{j=1}^N y_{j} = \sum_{i=1}^M x_i \leq \sqrt{\frac{{s}}{a}}.
\end{equation}
Lastly, plugging the above back into~\eqref{x_i and y_j preliminary inequalities} gives the required result.

\subsection{Lemmas supporting the proof of Theorem~\ref{thm:deviation of scaling factors}}\label{sec:supporting lemmas}
The first step in proving Theorem~\ref{thm:deviation of scaling factors} is to make use of Lemma~\ref{lem:boundedness of scaling factors} together with Hoeffding's inequality~\cite{hoeffding1994probability} to provide a concentration inequality for the sums of the rows and of the columns of $D(\mathbf{x}) \widetilde{A} D(\mathbf{y})$ around $\mathbf{r}$ and $\mathbf{c}$, respectively, where $(\mathbf{x},\mathbf{y})$ is any pair of scaling factors of $A$. This is the subject of the following lemma.

\begin{lem}[Concentration of row and column sums] \label{lem:deviation from scaling}
Suppose that $\{\widetilde{A}_{i,j}\}_{i=1}^M$ are independent for each $j\in [N]$, and $\{\widetilde{A}_{i,j}\}_{j=1}^N$ are independent for each $i\in [M]$. Furthermore, suppose that $\widetilde{A}_{i,j} \in [a_{i,j},b_{i,j}]$ a.s. for all $i\in [M]$ and $j\in [N]$, and denote $a = \min_{i,j} a_{i,j}$, $b = \max_{i,j}b_{i,j}$. Then, for any pair of vectors $(\mathbf{x},\mathbf{y})$ that scales $A=\mathbb{E}[\widetilde{A}]$ to row sums $\mathbf{r}$ can column sums $\mathbf{c}$, we have
    \begin{align}
    &\operatorname{Pr}\left\{ \left\vert \frac{1}{r_i} \sum_{j=1}^N  x_i \widetilde{A}_{i,j} y_j -1 \right\vert > \varepsilon \right\}  
    \leq 2 \operatorname{exp}\left( \frac{-2\varepsilon^2 s^2 }{ C^2 \sum_{j=1}^N c_j^2 (b_{i,j} - a_{i,j})^2}\right), \label{eq:row normalization random bound}\\
    &\operatorname{Pr}\left\{ \left\vert \frac{1}{c_j} \sum_{i=1}^M  x_i \widetilde{A}_{i,j} y_j -1 \right\vert > \varepsilon \right\}  
    \leq 2 \operatorname{exp}\left( \frac{-2\varepsilon^2 s^2 }{ C^2 \sum_{i=1}^M r_i^2 (b_{i,j} - a_{i,j})^2}\right), \label{eq:column normalization random bound}
\end{align}
for any $\varepsilon>0$ and all $i\in [M]$ and $j\in [N]$, where $C = b/a^2$.
\end{lem}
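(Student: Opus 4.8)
The plan is to bound each normalized row sum and column sum of $D(\mathbf{x})\widetilde{A}D(\mathbf{y})$ by a direct application of Hoeffding's inequality. Fix a row index $i$ and write the normalized row sum as
\[
S_i = \frac{1}{r_i}\sum_{j=1}^N x_i \widetilde{A}_{i,j} y_j = \sum_{j=1}^N \frac{x_i y_j}{r_i}\,\widetilde{A}_{i,j}.
\]
By the row-independence hypothesis, $S_i$ is a sum of $N$ independent random variables $\frac{x_i y_j}{r_i}\widetilde{A}_{i,j}$, the $j$-th of which takes values in an interval of width $\frac{x_i y_j}{r_i}(b_{i,j}-a_{i,j})$. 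Moreover, since $(\mathbf{x},\mathbf{y})$ scales $A=\mathbb{E}[\widetilde{A}]$ to row sums $\mathbf{r}$, the mean is exactly $\mathbb{E}[S_i] = \frac{1}{r_i}\sum_{j=1}^N x_i A_{i,j} y_j = 1$.

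Hoeffding's inequality then yields
\[
\operatorname{Pr}\left\{ |S_i - 1| > \varepsilon \right\} \leq 2\exp\left( \frac{-2\varepsilon^2}{\sum_{j=1}^N (x_i y_j / r_i)^2 (b_{i,j}-a_{i,j})^2} \right),
\]
so the remaining task is purely to bound the coefficients $x_i y_j / r_i$. The key observation is that the product $x_i y_j$, and hence $x_i y_j / r_i$, is invariant under the rescalings $(\mathbf{x},\mathbf{y})\mapsto(\alpha\mathbf{x},\alpha^{-1}\mathbf{y})$ that parametrize all scaling pairs of $A$. Therefore, although the lemma is stated for an arbitrary scaling pair, I may evaluate $x_i y_j / r_i$ using the specific $\ell_1$-normalized pair furnished by Lemma~\ref{lem:boundedness of scaling factors}. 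Substituting the upper bounds $x_i \leq (\sqrt{b}/a)\,\overline{r}_i$ and $y_j \leq (\sqrt{b}/a)\,\overline{c}_j$ with $\overline{r}_i = r_i/\sqrt{s}$ and $\overline{c}_j = c_j/\sqrt{s}$ gives, after a one-line computation, $x_i y_j / r_i \leq (b/a^2)\,c_j/s = C\,c_j/s$. Plugging $(x_i y_j/r_i)^2 \leq C^2 c_j^2/s^2$ into the denominator above produces exactly~\eqref{eq:row normalization random bound}.

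The column bound~\eqref{eq:column normalization random bound} follows by the symmetric argument: fix a column index $j$, use the column-independence hypothesis to express the normalized column sum as a sum of $M$ independent variables with mean $1$, apply Hoeffding's inequality, and bound the coefficient via the mirror-image estimate $x_i y_j / c_j \leq C\,r_i/s$. Since the entire argument reduces to Hoeffding's inequality together with the elementary coefficient bounds, I do not anticipate any substantial obstacle; the only point meriting care is the scale-invariance remark, which is what legitimizes importing the normalized bounds of Lemma~\ref{lem:boundedness of scaling factors} into a statement phrased for an arbitrary scaling pair.
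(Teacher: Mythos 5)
Your proof is correct and follows essentially the same route as the paper: Hoeffding's inequality applied to the weighted row/column sums, followed by the scale-invariance of the products $x_i y_j$ to import the bounds of Lemma~\ref{lem:boundedness of scaling factors} and obtain $x_i y_j \leq C\,\overline{r}_i\overline{c}_j$. The coefficient computation $x_i y_j/r_i \leq C c_j/s$ matches the paper's simplification of the Hoeffding denominator, so there is nothing to add.
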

\begin{proof}
Observe that if $\widetilde{A}_{i,j} \in [a_{i,j}, b_{i,j}]$ a.s., then $A_{i,j} = \mathbb{E}[\widetilde{A}_{i,j}] \in [a_{i,j}, b_{i,j}]$. 
Therefore, using the fact that $\sum_j x_i {A}_{i,j} y_j = r_i$, Hoeffding's inequality~\cite{hoeffding1994probability} gives that
\begin{align}
    \operatorname{Pr}\left\{ \left\vert \frac{1}{r_i} \sum_{j=1}^N  x_i \widetilde{A}_{i,j} y_j -1 \right\vert > \varepsilon \right\} 
    &= \operatorname{Pr}\left\{ \left\vert \sum_{j=1}^N  x_i \widetilde{A}_{i,j} y_j - \sum_{j=1}^N x_i A_{i,j} y_j \right\vert > \varepsilon r_i \right\} \nonumber \\
    &\leq 2 \operatorname{exp}\left( \frac{-2\varepsilon^2 r_i^2 }{\sum_{j=1}^N x_i^2 y_j^2 (b_{i,j} - a_{i,j})^2 }\right), \label{eq:Hoeffding bound use 1}
\end{align}
for all $i\in [M]$.
Since we can always find a constant $\alpha>0$ such that $\Vert \alpha \mathbf{x} \Vert_1 = \Vert \alpha^{-1} \mathbf{y} \Vert_1$, Lemma~\ref{lem:boundedness of scaling factors} implies that for all $i\in [M]$ and $j\in[N]$,
\begin{equation}
    x_i y_j =  (\alpha {x_i}) (\alpha^{-1} y_j) \leq \overline{r}_i\overline{c}_j \frac{b}{a^2} = C \overline{r}_i\overline{c}_j.
\end{equation}
Applying the above inequality to~\eqref{eq:Hoeffding bound use 1}, we get
\begin{align}
    \operatorname{Pr}\left\{ \left\vert \frac{1}{r_i} \sum_{j=1}^N  x_i \widetilde{A}_{i,j} y_j -1 \right\vert > \varepsilon \right\}
    &\leq 2 \operatorname{exp}\left( \frac{-2\varepsilon^2 r_i^2 }{C^2 \overline{r_i}^2 \sum_{j=1}^N \overline{c_j}^2 (b_{i,j} - a_{i,j})^2}\right) \nonumber \\
    &= 2 \operatorname{exp}\left( \frac{-2\varepsilon^2 s^2 }{ C^2 \sum_{j=1}^N c_j^2 (b_{i,j} - a_{i,j})^2}\right), \label{eq: concentration inequality for row sums in proof}
\end{align}
for all $i\in [M]$.
Analogously to the derivation of~\eqref{eq: concentration inequality for row sums in proof}, by using $\sum_i x_i {A}_{i,j} y_j = c_j$ together with Hoeffding's inequality, one can verify that
\begin{align}
    \operatorname{Pr}\left\{ \left\vert \frac{1}{c_j} \sum_{i=1}^M  x_i \widetilde{A}_{i,j} y_j -1 \right\vert > \varepsilon \right\} 
    &=  \operatorname{Pr}\left\{ \left\vert \sum_{i=1}^M  x_i \widetilde{A}_{i,j} y_j - \sum_{i=1}^M x_i A_{i,j} y_j \right\vert > \varepsilon c_j \right\} \nonumber \\
    &\leq 2 \operatorname{exp}\left( \frac{-2\varepsilon^2 s^2 }{ C^2 \sum_{i=1}^M r_i^2 (b_{i,j} - a_{i,j})^2}\right),
\end{align}
for all $j\in [N]$
    
\end{proof}

We next prove that if $\widetilde{A}$ can be approximately scaled by a pair of vectors $(\mathbf{x},\mathbf{y})$, then there exists a pair of vectors $(\widetilde{\mathbf{x}},\widetilde{\mathbf{y}})$ that scales $\widetilde{A}$ and is also close to $(\mathbf{x},\mathbf{y})$. The proof relies on extending Sinkhorn's original proof of uniqueness of the scaling factors in~\cite{sinkhorn1967diagonal} to describe the stability of the scaling factors under approximate scaling. We note that $\widetilde{A}$ is not considered as random in the following Lemma.
\begin{lem} [Stability of scaling factors under approximate scaling] \label{lem:closeness of scaling factors}
Let $\widetilde{A}\in\mathbb{R}^{M\times N}$ be a positive matrix and denote $a = \min_{i,j} {\widetilde{A}}_{i,j}$, $b = \max_{i,j} \widetilde{A}_{i,j}$. Suppose that there exists $\varepsilon\in (0,1)$ and positive vectors $\mathbf{x} = [x_1,\ldots,x_M]$ and $\mathbf{y} = [y_1,\ldots,y_N]$, such that
\begin{equation}
\left\vert \frac{1}{c_j} \sum_{i=1}^M x_i \widetilde{A}_{i,j} y_j - 1 \right\vert \leq \varepsilon, \qquad \left\vert \frac{1}{r_i} \sum_{j=1}^N x_i \widetilde{A}_{i,j} y_j -1 \right\vert \leq \varepsilon, \label{eq:approximate scaling condition in lemma}
\end{equation}
for all $i\in [M]$ and $j\in [N]$. Then, $\widetilde{A}$ can be scaled to row sums $\mathbf{r}$ and column sums $\mathbf{c}$ by a pair $(\widetilde{\mathbf{x}},\widetilde{\mathbf{y}})$ that satisfies
\begin{align}
     \frac{\vert\widetilde{x}_i - x_i \vert}{x_i} &\leq \frac{\varepsilon}{1-\varepsilon} + \frac{4 \varepsilon s \sqrt{b} }{a^2 C_1^{3/2} C_2^{3/2} M \min_i {r}_i }, \\
    \frac{\vert\widetilde{y}_j - y_j \vert}{y_j} &\leq \frac{\varepsilon}{1-\varepsilon} + \frac{4 \varepsilon s \sqrt{b} }{a^2 C_1^{3/2} C_2^{3/2} N \min_j {c}_j },
\end{align}
for all $i\in [M]$ and $j\in [N]$, where $C_1 = min_{i} \{x_i/\overline{r}_i\}$ and $C_2 = \min_j \{y_j/\overline{c}_j\}$.
\end{lem}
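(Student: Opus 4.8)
The plan is to parametrize any scaling pair of $\widetilde A$ (whose existence and ``up to a scalar'' structure are guaranteed by Theorem~\ref{thm:existence and uniquness of matrix scaling}) relative to the given $(\mathbf x,\mathbf y)$, and to quantify how close it must be by making Sinkhorn's uniqueness argument quantitative. Concretely, I would set $P_{i,j}=x_i\widetilde A_{i,j}y_j$ and, for a scaling pair $(\widetilde{\mathbf x},\widetilde{\mathbf y})$ of $\widetilde A$, introduce the ratios $u_i=\widetilde x_i/x_i$ and $v_j=\widetilde y_j/y_j$ (these are exactly what the two displayed bounds control). The freedom $(\widetilde{\mathbf x},\widetilde{\mathbf y})\mapsto(\alpha\widetilde{\mathbf x},\alpha^{-1}\widetilde{\mathbf y})$ translates into $u\mapsto\alpha u$, $v\mapsto\alpha^{-1}v$, and I will fix $\alpha$ only at the very end, in order to center the ratios about $1$.

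The exact scaling of $\widetilde A$ reads $\sum_j v_j P_{i,j}=r_i/u_i$ and $\sum_i u_i P_{i,j}=c_j/v_j$, while the hypothesis~\eqref{eq:approximate scaling condition in lemma} states that the row and column sums $\widetilde r_i=\sum_j P_{i,j}$ and $\widetilde c_j=\sum_i P_{i,j}$ satisfy $\widetilde r_i/r_i,\widetilde c_j/c_j\in[1-\varepsilon,1+\varepsilon]$. Dividing, the $P_{i,\cdot}$-weighted average of $v$ over row $i$ equals $r_i/(u_i\widetilde r_i)$, and the $P_{\cdot,j}$-weighted average of $u$ over column $j$ equals $c_j/(v_j\widetilde c_j)$; both sit in $[\tfrac1{1+\varepsilon},\tfrac1{1-\varepsilon}]$ times $1/u_i$ resp.\ $1/v_j$. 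Evaluating these identities at the indices attaining $\max_i u_i$, $\min_i u_i$, $\max_j v_j$, and $\min_j v_j$ --- precisely the extremes Sinkhorn exploits for uniqueness --- and bounding each weighted average below and above by the extreme value of the averaged vector, I obtain that $(\max_i u_i)(\min_j v_j)$ and $(\min_i u_i)(\max_j v_j)$ both lie in $[\tfrac1{1+\varepsilon},\tfrac1{1-\varepsilon}]$.

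The core of the proof, and the step that genuinely extends~\cite{sinkhorn1967diagonal}, is turning these qualitative extremal relations into a quantitative bound on the spreads $\max_i u_i/\min_i u_i$ and $\max_j v_j/\min_j v_j$. The mechanism is that in the row attaining $\max_i u_i$ the weighted average of $v$ is forced to lie within a factor $1+\mathcal O(\varepsilon)$ of its own minimum, so that every $v_j$ must in fact be close to that minimum; strict positivity of $P$ (hence of $\widetilde A$) is what makes this implication quantitative, while the two-sided bounds $a\le\widetilde A_{i,j}\le b$ together with the lower bounds $x_i\ge C_1\overline r_i$ and $y_j\ge C_2\overline c_j$ convert the entrywise comparisons into the explicit constant $\tfrac{\sqrt b}{a^2 C_1^{3/2}C_2^{3/2}}$. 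Crucially, I expect the estimate to require averaging the row identity over its $N$ columns (and the column identity over its $M$ rows) rather than isolating a single entry: this averaging is what should produce the factors $s/(N\min_j c_j)$ and $s/(M\min_i r_i)$ and suppress a spurious extra factor of $N$ (resp.\ $M$) that a naive single-entry comparison would incur. This averaged spread estimate is the main obstacle, precisely because the single-entry bound is too lossy by exactly this factor.

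Finally, having controlled the spreads, I would choose the scalar $\alpha$ so that the ratios are centered about $1$, and decompose $u_i=\tfrac{r_i}{\widetilde r_i}\cdot\tfrac{1}{\overline v^{(i)}}$, where $\overline v^{(i)}$ denotes the $P_{i,\cdot}$-weighted average of $v$ over row $i$. The factor $r_i/\widetilde r_i$ contributes the clean term $\varepsilon/(1-\varepsilon)$, while the deviation of $\overline v^{(i)}$ from $1$ is controlled by the spread bound of the previous step and contributes the second term; the symmetric decomposition $v_j=\tfrac{c_j}{\widetilde c_j}\cdot\tfrac{1}{\overline u^{(j)}}$ handles $v_j$. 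Collecting these two contributions yields the stated inequalities for $|\widetilde x_i-x_i|/x_i$ and $|\widetilde y_j-y_j|/y_j$.
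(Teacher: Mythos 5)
Your overall strategy coincides with the paper's: introduce $u_i=\widetilde x_i/x_i$, $v_j=\widetilde y_j/y_j$, derive the extremal relations $(\max_i u_i)(\min_j v_j),\,(\min_i u_i)(\max_j v_j)\in[\tfrac1{1+\varepsilon},\tfrac1{1-\varepsilon}]$, bound the spreads of $u$ and $v$, and recenter by a scalar $\alpha$. However, the step you yourself call ``the main obstacle'' is left unresolved, and the mechanism you describe for it does not close it. From the row $\ell=\operatorname{argmax}_i u_i$, after dividing by the entrywise lower bound on $\hat P_{\ell,j}=x_\ell\widetilde A_{\ell,j}y_j$, one obtains only the \emph{averaged} estimate $\frac1N\sum_{j}(v_j-\min_j v_j)\le \frac{2\varepsilon s\max_j v_j}{aC_1C_2N\min_j c_j}$. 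This does not imply that ``every $v_j$ must in fact be close to that minimum'' at this rate: an average deviation of size $B$ is compatible with a single $v_j$ deviating by $NB$, which is exactly the lossy single-entry bound you are trying to avoid. The missing idea in the paper's proof is the complementary averaged estimate obtained from the row $t=\operatorname{argmin}_i u_i$ via $(\min_i u_i)(\max_j v_j)\ge\tfrac1{1+\varepsilon}$, namely $\frac1N\sum_{j}(\max_j v_j-v_j)\le\frac{2\varepsilon s\max_j v_j}{aC_1C_2N\min_j c_j}$; adding the two averages yields the full spread because the two summands add up to the constant $\max_j v_j-\min_j v_j$ for every $j$. Without this pairing, the spread bound --- and hence the lemma --- does not follow from your sketch.

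A second, smaller issue: your final decomposition $u_i=\frac{r_i}{\widetilde r_i}\cdot\frac1{\overline v^{(i)}}$ controls $|u_i-1|$ by the deviation of a weighted average of the $v_j$'s, i.e.\ by the spread of $v$, which carries the factor $s/(N\min_j c_j)$; the lemma requires $s/(M\min_i r_i)$ in the bound for $\widetilde x_i$ (and symmetrically for $\widetilde y_j$). As written you would prove the statement with the two denominators interchanged. The paper instead bounds the spread of $u$ directly (the transposed argument you allude to) and centers $u_i$ at $\sqrt{\max_i u_i\min_i u_i}$, choosing $\alpha=\sqrt{\max_j v_j/\max_i u_i}$ so that $\alpha\sqrt{\max_i u_i\min_i u_i}=\sqrt{\max_j v_j\min_i u_i}$ lies within $\varepsilon/(1-\varepsilon)$ of $1$ by \eqref{eq:min u max v upper and lower bounds}; this keeps the row-sum quantity attached to $u$ and the column-sum quantity attached to $v$.
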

\begin{proof}
Let $(\widetilde{\mathbf{x}},\widetilde{\mathbf{y}})$ be the unique pair of scaling factors of $\widetilde{A}$ with $\Vert \widetilde{\mathbf{x}} \Vert_1 = \Vert \widetilde{\mathbf{y}} \Vert_1$ (see Lemma~\ref{lem:boundedness of scaling factors}), and define
\begin{equation}
    \hat{P}_{i,j} = {x}_i \widetilde{A}_{i,j} {y}_j, \qquad \widetilde{P}_{i,j} = \widetilde{x}_i \widetilde{A}_{i,j} \widetilde{y}_j = u_i \hat{P}_{i,j} v_j, \qquad u_i = \frac{\widetilde{x}_i}{x_i}, \qquad v_j = \frac{\widetilde{y}_j}{y_j}, \label{eq:P,P_hat,u, and v def}
\end{equation}
for all $i\in [M]$ and $j\in [N]$. Observe that $\sum_i \widetilde{P}_{i,j} = c_j$, $\sum_j \widetilde{P}_{i,j} = r_i$, and
\begin{equation}
    \left\vert \frac{1}{c_j} \sum_{i=1}^M \hat{P}_{i,j} - 1 \right\vert \leq \varepsilon, \qquad\qquad \left\vert \frac{1}{r_i} \sum_{j=1}^N \hat{P}_{i,j} -1 \right\vert \leq \varepsilon, \label{eq:u_i P_ij v_j sum closeness bounds}
\end{equation}
for all $i\in[M]$, $j\in[N]$.
The proof of Lemma~\ref{lem:closeness of scaling factors} is based on a manipulation of~\eqref{eq:u_i P_ij v_j sum closeness bounds} using $\widetilde{P}$, $\hat{P}$, $\mathbf{u} = [u_1,\ldots,u_M]$, $\mathbf{v} = [v_1,\ldots,v_N]$, and their propertie. Since this manipulation is somewhat technical, in what follows we break it down into several steps.
\subsubsection{Deriving bounds on $\{u_i\}$ and $\{v_j\}$}
Using the first inequality in~\eqref{eq:u_i P_ij v_j sum closeness bounds} for $j = \operatorname{argmin}_k v_k$, we have
\begin{equation}
    1 = \frac{1}{c_j} \sum_{i=1}^M \widetilde{P}_{i,j} = \frac{1}{c_j}\sum_{i=1}^M u_i \hat{P}_{i,j} v_j \leq \min_j v_j \max_i u_i  \frac{1}{c_j} \sum_{i=1}^M \hat{P}_{i,j} \leq (1+\varepsilon) \min_j v_j \max_i u_i. \label{eq:min v max u lower bound}
\end{equation}
Similarly, using the second inequality in~\eqref{eq:u_i P_ij v_j sum closeness bounds} for $i = \operatorname{argmax}_\ell v_\ell$ gives
\begin{equation}
    1 = \frac{1}{r_i}\sum_{j=1}^N \widetilde{P}_{i,j} = \frac{1}{r_i}\sum_{j=1}^N u_i \hat{P}_{i,j} v_j \geq \max_i u_i \min_j v_j  \frac{1}{r_i} \sum_{j=1}^N \hat{P}_{i,j} \geq (1-\varepsilon) \max_i u_i \min_j v_j, \label{eq: max u min v upper bound}
\end{equation}
and by combining~\eqref{eq:min v max u lower bound} and~\eqref{eq: max u min v upper bound} we obtain
\begin{equation}
    \frac{1}{1+\varepsilon} \leq \max_i u_i \min_j v_j \leq \frac{1}{1-\varepsilon}. \label{eq:max u min v upper and lower bounds}
\end{equation}
Analogously to~\eqref{eq:min v max u lower bound} and~\eqref{eq: max u min v upper bound}, it is easy to verify that by using the first inequality in~\eqref{eq:u_i P_ij v_j sum closeness bounds} for $j = \operatorname{argmax}_k v_k$ and using the second inequality in~\eqref{eq:u_i P_ij v_j sum closeness bounds} for $i = \operatorname{argmin}_\ell v_\ell$, one gets 
\begin{equation}
    \frac{1}{1+\varepsilon} \leq \min_i u_i \max_j v_j \leq \frac{1}{1-\varepsilon}. \label{eq:min u max v upper and lower bounds}
\end{equation}
Note that~\eqref{eq:min u max v upper and lower bounds} can also be obtained directly from~\eqref{eq:max u min v upper and lower bounds} by a symmetry argument, that is, by considering~\eqref{eq:max u min v upper and lower bounds} in the setting when $\widetilde{A}$ is replaced with its transpose, thereby interchanging the roles of $\mathbf{u}$ and $\mathbf{v}$.  

In addition, according to Lemma~\ref{lem:boundedness of scaling factors} and using the fact that $x_i\geq C_1 \overline{r}_i$ and $y_j\geq C_2 \overline{c}_j$ (from the conditions in Lemma~\ref{lem:closeness of scaling factors}), it follows that for all $i\in [M]$ and $j\in [N]$
\begin{equation}
    u_i \leq \frac{\sqrt{b}}{a C_1}, \qquad \qquad v_j \leq \frac{\sqrt{b}}{a C_2}. \label{eq:u_i and v_j bounds}
\end{equation}

\subsubsection{Bounding $\max_j v_j - \min_j v_j$ and $\max_i u_i - \min_i u_i$}
Let us denote $\ell = \operatorname{argmax}_i u_i$. By the second inequality in~\eqref{eq:u_i P_ij v_j sum closeness bounds} together with~\eqref{eq:max u min v upper and lower bounds}, we can write
\begin{equation}
    1 = \frac{1}{r_\ell}\sum_{j=1}^N \widetilde{P}_{\ell,j} = \frac{1}{r_\ell}\sum_{j=1}^N u_\ell \hat{P}_{\ell,j} v_j \geq \frac{1}{(1+\varepsilon) r_\ell}\sum_{j=1}^N \hat{P}_{\ell,j} \frac{v_j}{\min_j v_j}, \label{eq:bounding max v - min v step 1}
\end{equation}
implying that
\begin{equation}
    \frac{1}{r_\ell} \sum_{j=1}^N \hat{P}_{\ell,j} \left( \frac{v_j}{\min_j v_j} -1 \right) \leq {1+\varepsilon} - \frac{1}{r_\ell}\sum_{j=1}^N \hat{P}_{\ell,j} \leq {2\varepsilon}. \label{eq:preliminary bound 1}
\end{equation}
Multiplying~\eqref{eq:preliminary bound 1} by $\min_j v_j /\min_j  \hat{P}_{\ell,j}$, it follows that
\begin{equation}
   \frac{1}{r_\ell} \sum_{j=1}^N ({v_j} - \min_j v_j ) \leq \frac{1}{r_\ell}  \sum_{j=1}^N \frac{\hat{P}_{\ell,j}}{\min_j \hat{P}_{\ell,j} } ({v_j} - \min_j v_j ) \leq 2\varepsilon \frac{\min_j v_j}{\min_j \hat{P}_{\ell,j} } \leq  \frac{2\varepsilon \min_j v_j}{a C_1 C_2 \overline{r}_\ell \min_j \overline{c}_j }, \label{eq:sum v_j -min v_j bound intermediate}
\end{equation}
where we used the definition of $\hat{P}$ together with the conditions in Lemma~\ref{lem:closeness of scaling factors}.
Multiplying~\eqref{eq:sum v_j -min v_j bound intermediate} by $r_\ell/N$ and employing the definitions of $\overline{r}_i$ and $\overline{c}_j$ (see~\eqref{eq:r_overline and c_overline def}) gives
\begin{equation}
    \frac{1}{N} \sum_{j=1}^N ({v_j} - \min_j v_j ) \leq \frac{2\varepsilon {s} \min_j v_j}{a C_1 C_2 N \min_j {c}_j } \leq \frac{2\varepsilon {s} \max_j v_j}{a C_1 C_2 N \min_j {c}_j }. \label{eq:bounding max v - min v step 2}
\end{equation}

We next provide a derivation analogous to~\eqref{eq:bounding max v - min v step 1}--\eqref{eq:bounding max v - min v step 2} to obtain a bound for $\frac{1}{N} \sum_{j=1}^N (\max_j v_j - v_j )$. 
Let us denote $t = \operatorname{argmin}_i u_i$. Using the second inequality in~\eqref{eq:u_i P_ij v_j sum closeness bounds} together with~\eqref{eq:min u max v upper and lower bounds}, we have
\begin{equation}
    1 = \frac{1}{r_t}\sum_{j=1}^N \widetilde{P}_{t,j} = \frac{1}{r_t}\sum_{j=1}^N u_t \hat{P}_{t,j} v_j \leq \frac{1}{(1-\varepsilon)r_t}\sum_{j=1}^N \hat{P}_{t,j} \frac{v_j}{\max_j v_j},
\end{equation}
and therefore
\begin{equation}
    \frac{1}{r_t} \sum_{j=1}^N \hat{P}_{t,j} \left( 1-\frac{v_j}{\max_j v_j}\right) \leq \frac{1}{r_t}\sum_{j=1}^N \hat{P}_{t,j} - (1-\varepsilon) \leq 2\varepsilon.
\end{equation}
Multiplying the above by $\max_j v_j /\min_j  \hat{P}_{t,j}$, it follows that
\begin{equation}
    \frac{1}{r_t} \sum_{j=1}^N (\max_j v_j  - v_j) \leq \frac{1}{r_t} \sum_{j=1}^N \frac{\hat{P}_{t,j}}{\min_j  \hat{P}_{t,j}} (\max_j v_j  - v_j) \leq 2\varepsilon \frac{\max_j v_j}{\min_j  \hat{P}_{t,j}} \leq \frac{2\varepsilon \max_j v_j}{a C_1 C_2 \overline{r}_t \min_j \overline{c}_j }.
\end{equation}
Furthermore, multiplying the above by $r_t/N$ and using the definitions of $\overline{r}_i$ and $\overline{c}_j$ (see~\eqref{eq:r_overline and c_overline def}), we get
\begin{equation}
   \frac{1}{N} \sum_{j=1}^N (\max_j v_j  - v_j) \leq 2\varepsilon \frac{\max_j v_j}{\min_j \hat{P}_{t,j} } \leq \frac{2\varepsilon s \max_j v_j }{a C_1 C_2 N \min_j {c}_j }. \label{eq:bounding max v - min v step 3}
\end{equation}

Lastly, summing~\eqref{eq:bounding max v - min v step 2} and~\eqref{eq:bounding max v - min v step 3} gives
\begin{equation}
    \max_j v_j - \min_j v_j \leq \frac{4 \varepsilon s \max_j v_j }{a C_1 C_2 N \min_j {c}_j }. \label{eq:max v_j - min v_j bound}
\end{equation}
It is easy to verify that by repeating the derivation of~\eqref{eq:bounding max v - min v step 1} --~\eqref{eq:max v_j - min v_j bound} analogously for $u_i$ instead of $v_j$, we get
\begin{equation}
    \max_i u_i - \min_i u_i \leq \frac{4 \varepsilon s \max_i u_i }{a C_1 C_2 M \min_i {r}_i }. \label{eq:max u_i - min u_i bound}
\end{equation}
We omit the full derivation for the sake of brevity. Note that~\eqref{eq:max u_i - min u_i bound} can also be obtained directly from~\eqref{eq:max v_j - min v_j bound} by a symmetry argument, namely by considering~\eqref{eq:max v_j - min v_j bound} in the setting where $\widetilde{A}$ is replaced with its transpose, so that $N$ is replaced with $M$, $\mathbf{c}$ is replaced with $\mathbf{r}$, and $\mathbf{v}$ is replaced with $\mathbf{u}$.

\subsubsection{Bounding $1 - \alpha u_i$ and $1 - \alpha^{-1} v_j$ for some $\alpha > 0$}
Observe that $|\tau - v_j| \leq \max_j v_j - \min_j v_j$ for any $\tau \in [\min_j v_j,\max_j v_j]$ and all $j\in[N]$. Taking $\tau$ as the geometric mean of $\max_j v_j$ and $\min_j v_j$, together with~\eqref{eq:max v_j - min v_j bound} gives
\begin{equation}
    \left\vert\sqrt{\max_j v_j \min_j v_j} -  v_j \right\vert \leq \frac{4 \varepsilon s \max_j v_j }{a C_1 C_2 N \min_j {c}_j }, \label{eq:max v min u geometric mean bound}
\end{equation}
for all $j\in [N]$. Multiplying both hand sides of~\eqref{eq:max v min u geometric mean bound} by $\alpha^{-1} = \sqrt{\max_i u_i / \max_j v_j}$ we get
\begin{equation}
    \left\vert\sqrt{\max_i u_i \min_j v_j} - \alpha^{-1} v_j \right\vert 
    \leq \frac{4 \varepsilon s \sqrt{\max_j v_j \max_i u_i} }{a C_1 C_2 N \min_j {c}_j } 
    \leq \frac{4 \varepsilon s \sqrt{b} }{a^2 C_1^{3/2} C_2^{3/2} N \min_j {c}_j },
\end{equation}
where we also used~\eqref{eq:u_i and v_j bounds} in the last inequality. According to~\eqref{eq:max u min v upper and lower bounds}, we have for all $\varepsilon\in (0,1)$ that
\begin{equation}
    1-\frac{\varepsilon}{1-\varepsilon} \leq {\frac{1}{1+\varepsilon}} \leq \sqrt{\frac{1}{1+\varepsilon}} \leq \sqrt{\max_i u_i \min_j v_j} \leq  \sqrt{\frac{1}{1-\varepsilon}} \leq \frac{1}{1-\varepsilon} = 1 + \frac{\varepsilon}{1-\varepsilon},
\end{equation}
which together with~\eqref{eq:max v min u geometric mean bound} implies that
\begin{equation}
    \left\vert 1 - \alpha^{-1} v_j \right\vert 
    \leq \frac{\varepsilon}{1-\varepsilon} + \frac{4 \varepsilon s \sqrt{b} }{a^2 C_1^{3/2} C_2^{3/2} N \min_j {c}_j }. \label{eq:1 - alpha_inv v_j bound}
\end{equation}

Analogously to~\eqref{eq:max v min u geometric mean bound}, from~\eqref{eq:max u_i - min u_i bound} we obtain
\begin{equation}
    \left\vert\sqrt{\max_i u_i \min_i u_i} -  u_i \right\vert \leq \frac{4 \varepsilon s \max_i u_i }{a C_1 C_2 M \min_i {r}_i }, \label{eq:max u min u geometric mean bound}
\end{equation}
for all $i\in [M]$. Multiplying both hand sides of~\eqref{eq:max u min u geometric mean bound} by $\alpha = \sqrt{ \max_j v_j / \max_i u_i }$ we get
\begin{equation}
    \left\vert\sqrt{\max_j v_j \min_i u_i} - \alpha u_i \right\vert 
    \leq \frac{4 \varepsilon s \sqrt{\max_j v_j \max_i u_i} }{a C_1 C_2 M \min_i {r}_i } 
    \leq \frac{4 \varepsilon s \sqrt{b} }{a^2 C_1^{3/2} C_2^{3/2} M \min_i {r}_i }.
\end{equation}
Consequently, using~\eqref{eq:min u max v upper and lower bounds}, and analogously to the derivation of~\eqref{eq:1 - alpha_inv v_j bound}, it follows that
\begin{equation}
    \left\vert 1 - \alpha u_i \right\vert 
    \leq \frac{\varepsilon}{1-\varepsilon} + \frac{4 \varepsilon s \sqrt{b} }{a^2 C_1^{3/2} C_2^{3/2} M \min_i {r}_i }, \label{eq:1 - alpha u_i bound}
\end{equation}
which together with the definition of $\mathbf{u}$ and $\mathbf{v}$ in~\eqref{eq:P,P_hat,u, and v def} concludes the proof (since $(\alpha \widetilde{\mathbf{x}},\alpha^{-1} \widetilde{\mathbf{y}})$ is a pair of scaling factors of $\widetilde{A}$).

\end{proof}

\subsection{Proof of Theorem~\ref{thm:deviation of scaling factors}} \label{sec:proof of main theorem}
According to Lemma~\ref{lem:boundedness of scaling factors} there exists a unique pair of positive vectors $({\mathbf{x}},{\mathbf{y}})$ satisfying $\Vert {\mathbf{x}}\Vert_1 = \Vert {\mathbf{y}}\Vert_1$ that scales ${A}$ to row sums $\mathbf{r}$ and column sums $\mathbf{c}$, with
\begin{equation}
    \frac{\sqrt{a}}{b} \leq \frac{{x_i}}{\overline{r}_i} \leq \frac{\sqrt{b}}{a}, \qquad\qquad \frac{\sqrt{a}}{b} \leq \frac{{y_j}}{\overline{c}_j} \leq \frac{\sqrt{b}}{a}, \label{eq:x_i and y_j bounds in proof}
\end{equation}
for all $i\in [M]$ and $j\in [N]$.
Additionally, Lemma~\ref{lem:deviation from scaling} together with the union bound (over all $i\in[M]$ and $j\in[N]$), asserts that with probability at least
\begin{equation}
    1 - 2 \sum_{i=1}^M \operatorname{exp}\left( - \frac{2 \varepsilon^2 s^2}{C^2 \sum_{j=1}^N c_j^2 (b_{i,j} - a_{i,j})^2 } \right) - 2 \sum_{j=1}^N \operatorname{exp}\left(-\frac{2 \varepsilon^2 s^2}{C^2 \sum_{i=1}^M r_i^2 (b_{i,j} - a_{i,j})^2}\right),
\end{equation}
we have 
\begin{equation}
\max_{j \in [N]}\left\vert \frac{1}{M} \sum_{i=1}^M {x}_i \widetilde{A}_{i,j} {y}_j - 1 \right\vert \leq \varepsilon, \qquad \text{and} \qquad \max_{i \in [M]}\left\vert \frac{1}{N} \sum_{j=1}^N {x}_i \widetilde{A}_{i,j} {y}_j -1 \right\vert \leq \varepsilon, \label{eq:approx scalability for all i,j}
\end{equation}
where $C = b/a^2$.
If~\eqref{eq:approx scalability for all i,j} holds, we apply Lemma~\ref{lem:closeness of scaling factors} with $C_1 = \min_i \{x_i / \overline{r}_i \} \geq \sqrt{a}/b$ and $C_2 = \min_j \{y_j / \overline{c}_j\} \geq \sqrt{a}/b$ (using~\eqref{eq:x_i and y_j bounds in proof}), which guarantees that $\widetilde{A}$ can be scaled to row sums $\mathbf{r}$ and column sums $\mathbf{c}$ by a pair $(\widetilde{\mathbf{x}},\widetilde{\mathbf{y}})$ that satisfies
\begin{align}
     \frac{\vert\widetilde{x}_i - x_i \vert}{x_i} &\leq \frac{\varepsilon}{1-\varepsilon} + \frac{4 \varepsilon s \sqrt{b} }{a^2 C_1^{3/2} C_2^{3/2} M \min_i {r}_i } \leq \varepsilon \left( 2 + \frac{4 s}{M \min_i {r}_i } \left( \frac{b}{a} \right)^{7/2} \right), \\
    \frac{\vert\widetilde{y}_j - y_j \vert}{y_j} &\leq \frac{\varepsilon}{1-\varepsilon} + \frac{4 \varepsilon s \sqrt{b} }{a^2 C_1^{3/2} C_2^{3/2} N \min_j {c}_j } \leq \varepsilon \left( 2 + \frac{4 s}{N \min_j {c}_j } \left( \frac{b}{a} \right)^{7/2} \right),
\end{align}
for all $i\in [M]$, $j\in [N]$, and any $\varepsilon \in (0,1/{2}]$.
Overall, taking $\varepsilon = \delta/{2}$, and using the fact that $s = \Vert \mathbf{r}\Vert_1 \geq M \min_i r_i$ and $s = \Vert \mathbf{c}\Vert_1 \geq N \min_j c_j$, asserts that that exists a pair $(\widetilde{\mathbf{x}},\widetilde{\mathbf{y}})$ that scales $\widetilde{A}$ to row sums $\mathbf{r}$ and column sums $\mathbf{c}$, such that for any $\delta \in (0,1]$
\begin{align}
     \frac{\vert\widetilde{x}_i - x_i \vert}{x_i} &\leq  \frac{ \delta s}{M \min_i {r}_i } \left( 1 + 2 \left(\frac{b}{a} \right)^{7/2} \right) , \\
    \frac{\vert\widetilde{y}_j - y_j \vert}{y_j} &\leq \frac{ \delta s}{N \min_j {c}_j } \left( 1 + 2 \left(\frac{b}{a} \right)^{7/2} \right),
\end{align}
for all $i\in [M]$ and $j\in[N]$, with probability at least
\begin{equation}
     1 - 2 \sum_{i=1}^M \operatorname{exp}\left( - \frac{ \delta^2 s^2}{2 C^2 \sum_{j=1}^N c_j^2 (b_{i,j} - a_{i,j})^2 } \right) - 2 \sum_{j=1}^N \operatorname{exp}\left(-\frac{ \delta^2 s^2}{2 C^2 \sum_{i=1}^M r_i^2 (b_{i,j} - a_{i,j})^2}\right).
\end{equation}
Therefore, using $b_{i,j} - a_{i,j} \leq \max_{i,j} \{ b_{i,j} - a_{i,j} \} = d$ proves Theorem~\ref{thm:deviation of scaling factors} with
\begin{equation}
    C_p = \sqrt{2}\left(\frac{b d}{a^2}\right), \qquad \qquad C_e = 1 + 2\left(\frac{b}{a} \right)^{7/2}. \label{eq:C_r and C_s expressions}
\end{equation}

\subsection{Proof of Theorem~\ref{thm:relative entry-wise convergence of scaling factors}} \label{sec:proof of theorem on convergence rate of scaling factors}
\sloppy We begin by considering indices $N$ for which $\rho_1^{(N)} \rho_2^{(N)}\sqrt{{\log (\max \{ M_N, N \}) } } \leq 1/\sqrt{2C_p}$, where $C_p$ is from Theorem~\ref{thm:deviation of scaling factors}. In this case, we apply Theorem~\ref{thm:deviation of scaling factors} using $\delta = \rho_1^{(N)} \sqrt{2 C_p {\log (\max \{ M_N, N \}) } }$, noting that $\delta \leq 1$ as required since $\rho_2^{(N)} \geq 1$ for all $N$. Consequently, for each such index $N$ there exists a pair of positive vectors $(\widetilde{\mathbf{x}}^{(N)},\widetilde{\mathbf{y}}^{(N)})$ that scales $\widetilde{A}^{(N)}$ to row sums $\mathbf{r}^{(N)}$ and column sums $\mathbf{c}^{(N)}$, such that with probability at least
\begin{align}
    &1 - 2 M_N \operatorname{exp}\left( -\frac{\delta^2 (s^{(N)})^2}{ C_p \Vert \mathbf{c}^{(N)} \Vert_2^2} \right) - 2 N \operatorname{exp}\left(-\frac{\delta^2 (s^{(N)})^2}{ C_p \Vert \mathbf{r}^{(N)} \Vert_2^2}\right) \nonumber \\
    &\geq 1 - 2 M \operatorname{exp}\left( -2 \log (\max \{ M_N, N \})  \right) - 2 N \operatorname{exp}\left(-2 \log (\max \{ M_N, N \}) \right) \nonumber \\
    &\geq 1 - \frac{4}{\min \{ M_N, N \}}, \label{eq: probability lower bound in proof of asymptotic convegrence of scaling factors}
\end{align}
we have for all $i\in[M_N]$ and $j\in[N]$,
\begin{align}
\begin{aligned}
        \frac{\vert \widetilde{x}^{(N)}_i - x^{(N)}_i \vert}{x^{(N)}_i} &\leq  
      \frac{ C_e \delta s^{(N)} }{M_N \min_i r^{(N)}_i} \leq C_c \rho_1^{(N)} \rho_2^{(N)}\sqrt{{2 C_p \log (\max \{ M_N, N \}) } }, \\
     \frac{\vert \widetilde{y}^{(N)}_j - y^{(N)}_j \vert}{y^{(N)}_j} &\leq   \frac{ C_e \delta s^{(N)} }{N \min_j {c}^{(N)}_j } \leq C_c \rho_1^{(N)} \rho_2^{(N)}\sqrt{{2 C_p \log (\max \{ M_N, N \}) } }. \label{eq:convergence rate upper bound 1}
\end{aligned}
\end{align}

Next, we consider indices $N$ for which $\rho_1^{(N)} \rho_2^{(N)}\sqrt{{\log (\max \{ M_N, N \}) } } > 1/\sqrt{2C_p}$. In this case, we first apply Lemma~\ref{lem:boundedness of scaling factors} to $\widetilde{A}^{(N)}$, which states there exists a pair of positive vectors $(\widetilde{\mathbf{x}}^{(N)},\widetilde{\mathbf{y}}^{(N)})$ that scales $\widetilde{A}^{(N)}$ to row sums $\mathbf{r}^{(N)}$ and column sums $\mathbf{c}^{(N)}$, such that
\begin{equation}
        \frac{\sqrt{a}}{b} \leq \frac{\widetilde{x}^{(N)}_i}{\overline{r}^{(N)}_i} \leq \frac{\sqrt{b}}{a}, \qquad \qquad  \frac{\sqrt{a}}{b} \leq \frac{\widetilde{y}^{(N)}_j}{\overline{c}^{(N)}_j} \leq \frac{\sqrt{b}}{a},
    \end{equation}
for all $i\in [M_N]$ and $j\in[N]$.
Combining the above inequalities with the analogous inequalities for $\mathbf{x}^{(N)}$ and $\mathbf{y}^{(N)}$ (that scale $A$ and satisfy $\Vert \mathbf{x}^{(N)} \Vert_1 = \Vert \mathbf{y}^{(N)} \Vert_1$) gives
\begin{equation}
    \frac{\widetilde{x}^{(N)}_i}{{x}^{(N)}_i} \leq \left(\frac{{b}}{a}\right)^{3/2}, \qquad \qquad  \frac{\widetilde{y}^{(N)}_j}{{y}^{(N)}_j} \leq \left(\frac{{b}}{a}\right)^{3/2},
\end{equation}
for all $i\in [M_N]$ and $j\in[N]$.
Therefore, we have that
\begin{align}
    \mathcal{E}_N(\widetilde{\mathbf{x}}^{(N)},\widetilde{\mathbf{y}}^{(N)}) 
    &= \max \left\{ \max_{i\in [M_N]} \left\vert \frac{\widetilde{x}^{(N)}_i}{{x}^{(N)}_i} - 1 \right\vert, \max_{j\in [N]} \left\vert \frac{\widetilde{y}^{(N)}_j}{{y}^{(N)}_j} - 1 \right\vert \right\} \leq \left(\frac{{b}}{a}\right)^{3/2} \nonumber \\
    &< \sqrt{2C_p}\left(\frac{{b}}{a}\right)^{3/2}  \rho_1^{(N)} \rho_2^{(N)}\sqrt{{\log (\max \{ M_N, N \}) } }. \label{eq:convergence rate upper bound 2}
\end{align}

Since the lower bound in the right hand-side of~\eqref{eq: probability lower bound in proof of asymptotic convegrence of scaling factors} converges to $1$ as $N\rightarrow \infty$, \eqref{eq:convergence rate upper bound 1} together with~\eqref{eq:convergence rate upper bound 2} provide the required result for all indices $N$.

\subsection{Proof of Theorem~\ref{thm:convergence of scaled matrix in operator norm}} \label{sec:proof of theorem on convergence in operator norm}
Let us define ${\eta}^{(N)} = \widetilde{\mathbf{x}}^{(N)} - {\mathbf{x}}^{(N)}  $, and $\xi^{(N)} = \widetilde{\mathbf{y}}^{(N)} - {\mathbf{y}}^{(N)}$. We can write
\begin{align}
    &\left\Vert D(\widetilde{\mathbf{x}}^{(N)} )\widetilde{A}^{(N)} D(\widetilde{\mathbf{y}}^{(N)} ) - D({\mathbf{x}}^{(N)} ) A^{(N) } D({\mathbf{y}}^{(N)} ) \right\Vert_2 
    \leq \Vert D({{x}}^{(N)} ) (\widetilde{A}^{(N)} - A^{(N) } ) D({{y}}^{(N)})\Vert_2 \nonumber \\
    &+ \Vert D(\eta^{(N)}) \widetilde{A} D({{y}}^{(N)}) \Vert_2 + \Vert  D({{x}}^{(N)} ) \widetilde{A} D(\xi^{(N)}) \Vert_2 +  \Vert D(\eta^{(N)}) \widetilde{A} D(\xi^{(N)}) \Vert_2. \label{eq:scaled matrix operator norm bound in proof}
\end{align}
We now bound the summands in the right-hand side of~\eqref{eq:scaled matrix operator norm bound in proof} one by one. Note that since $\{A_{i,j}^{(N)}\}_{i,j}$ are independent and are confined to the interval $[a,b]$ for each index $N$, the conditions in Theorem~\ref{thm:deviation of scaling factors} hold when replacing $\widetilde{A}$ with $\widetilde{A}^{(N)}$.
For the first summand in~\eqref{eq:scaled matrix operator norm bound in proof}, applying Lemma~\ref{lem:boundedness of scaling factors} to $A^{(N)}$ we have
\begin{align}
    &\Vert D({\mathbf{x}}^{(N)} ) (\widetilde{A}^{(N)} - A^{(N) } ) D({\mathbf{y}}^{(N)})\Vert_2 
    \leq \Vert D({\mathbf{x}}^{(N)} ) \Vert_2 \cdot \Vert \widetilde{A}^{(N)} - A^{(N) } \Vert_2 \cdot \Vert D({\mathbf{y}}^{(N)})\Vert_2 \nonumber \\
    &\leq \frac{b}{a^2} \frac{\max_i r_i^{(N)}}{\sqrt{s^{(N)}}} \Vert \widetilde{A}^{(N)} - A^{(N) } \Vert_2 \frac{\max_j c_j^{(N)}}{\sqrt{s^{(N)}}} = \frac{b \rho_3^{(N)} }{a^2 \sqrt{M_N N}} \Vert \widetilde{A}^{(N)} - A^{(N) } \Vert_2. \label{eq:first summand bound derivation}
\end{align}
Since $a \leq \widetilde{A}_{i,j}^{(N)} \leq b$, then also $a \leq A_{i,j}^{(N)}\leq b$, which implies that $a-b \leq \widetilde{A}_{i,j}^{(N)} - A_{i,j}^{(N)} \leq b-a$. Hence, $\{\widetilde{A}_{i,j}^{(N)} - A_{i,j}^{(N)}\}_{i,j}$ are independent, have mean zero, and  are bounded (and therefore sub-Gaussian). Applying Theorem 4.4.5 in~\cite{vershynin2018high} with $t = \sqrt{\log N}$ gives
\begin{equation}
    \Vert \widetilde{A}^{(N)} - A^{(N) } \Vert_2 = \mathcal{O}_{\text{w.h.p}} (\sqrt{N} + \sqrt{M_N} + \sqrt{\log N}). \label{eq:operator norm bound vershynin}
\end{equation}
Combining~\eqref{eq:operator norm bound vershynin} with~\eqref{eq:first summand bound derivation} asserts that
\begin{equation}
    \Vert D({\mathbf{x}}^{(N)} ) (\widetilde{A}^{(N)} - A^{(N) } ) D({\mathbf{y}}^{(N)})\Vert_2 = \mathcal{O}_{\text{w.h.p}} \left( \frac{\rho_3^{(N)}}{\sqrt{M_N N}} (\sqrt{N} + \sqrt{M_N} + \sqrt{\log N})\right). \label{eq:eq:first summand bound}
\end{equation}
Continuing, for the second summand in~\eqref{eq:scaled matrix operator norm bound in proof}, we have
\begin{align}
    &\Vert D(\eta^{(N)}) \widetilde{A} D({{y}}^{(N)}) \Vert_2 
    \leq \Vert D(\eta^{(N)}) \Vert_2 \cdot \Vert \widetilde{A} \Vert_2  \cdot \Vert D({{y}}^{(N)}) \Vert_2 
    \leq \Vert D(\eta^{(N)}) \Vert_2 \cdot \Vert \widetilde{A} \Vert_F   \frac{\max_j c_j^{(N)}}{\sqrt{s^{(N)}}} \nonumber  \\ 
    &\leq \max_{i} |\eta_i^{(N)}| \frac{b \sqrt{M_N N }\max_j c_j^{(N)}}{\sqrt{s^{(N)}}} 
    \leq \max_i \frac{| \widetilde{x}_i^{(N)} - x_i^{(N)} | }{x_i^{(N)}} \cdot \max_i x_i^{(N)} \cdot  \frac{b \sqrt{M_N N }\max_j c_j^{(N)}}{\sqrt{s^{(N)}}} \nonumber \\
    &\leq \max_i \frac{| \widetilde{x}_i^{(N)} - x_i^{(N)} | }{x_i^{(N)}} \cdot \frac{b^{3/2} \sqrt{M_N N } \max_i r_i^{(N)} \cdot \max_j c_j^{(N)}}{{a s^{(N)}}} \nonumber \\
    &= \mathcal{O}_{\text{w.h.p}}\left( \rho_1^{(N)} \rho_2^{(N)} \rho_3^{(N)} \sqrt{ \log (\max \{ M_N, N \}) } \right), \label{eq:second summand bound derivation}
\end{align}
where we used Lemma~\ref{lem:boundedness of scaling factors} (applied to $A^{(N)}$) and Theorem~\ref{thm:relative entry-wise convergence of scaling factors}. Analogously, it is easy to verify that the third summand in~\eqref{eq:scaled matrix operator norm bound in proof} admits the same bound as the second summand, namely
\begin{equation}
    \Vert  D({{x}}^{(N)} ) \widetilde{A} D(\xi^{(N)}) \Vert_2 = \mathcal{O}_{\text{w.h.p}}\left( \rho_1^{(N)} \rho_2^{(N)} \rho_3^{(N)} \sqrt{\log (\max \{ M_N, N \}) } \right). \label{eq:third summand bound derivation}
\end{equation}
For the fourth summand in~\eqref{eq:scaled matrix operator norm bound in proof}, we write
\begin{align}
    &\Vert D(\eta^{(N)}) \widetilde{A} D(\xi^{(N)}) \Vert_2 
    \leq \Vert D(\eta^{(N)}) \Vert_2 \cdot \Vert \widetilde{A} \Vert_2 \cdot \Vert D(\xi^{(N)}) \Vert_2 
    \leq \max_i |\eta_i^{(N)}| \cdot \Vert \widetilde{A} \Vert_F \cdot \max_j |\xi_j^{(N)}| \nonumber \\
    &\leq \max_i \frac{| \widetilde{x}_i^{(N)} - x_i^{(N)} | }{x_i^{(N)}} \cdot \max_i x_i^{(N)} \cdot \sqrt{M_N N} \cdot \max_j \frac{| \widetilde{y}_j^{(N)} - y_j^{(N)} | }{y_j^{(N)}} \cdot \max_j y_j^{(N)} \nonumber \\
    &\leq \max_i \frac{| \widetilde{x}_i^{(N)} - x_i^{(N)} | }{x_i^{(N)}} \cdot \max_j \frac{| \widetilde{y}_j^{(N)} - y_j^{(N)} | }{y_j^{(N)}}  \cdot \frac{b \sqrt{M_N} \max_i r_i^{(N)} \cdot \sqrt{N} \max_j c_j^{(N)} }{a^2 s^{(N)}  } \nonumber \\
    &= \mathcal{O}_{\text{w.h.p}} \left( \left(\rho_1^{(N)} \rho_2^{(N)} \sqrt{\log (\max \{ M_N, N \})} \right)^2 \rho_3^{(N)}  \right), \label{eq:first summand bound derivation 4}
\end{align}
where we again used Lemma~\ref{lem:boundedness of scaling factors} (applied to $A^{(N)}$) and Theorem~\ref{thm:relative entry-wise convergence of scaling factors}.

Observe that $s^{(N)} = \Vert \mathbf{r}^{(N)} \Vert_1 \leq \sqrt{M_N} \Vert \mathbf{r}^{(N)} \Vert_2$, and $s^{(N)} = \Vert \mathbf{c}^{(N)} \Vert_1 \leq \sqrt{N} \Vert \mathbf{c}^{(N)} \Vert_2$. Therefore, 
\begin{equation}
    \rho_1^{(N)} = \max \left\{\frac{\Vert \mathbf{r}^{(N)} \Vert_2}{s^{(N)}}, \frac{\Vert \mathbf{c}^{(N)} \Vert_2}{s^{(N)}} \right\} \geq \max \{ \frac{1}{\sqrt{M_N}}, \frac{1}{\sqrt{N}} \}.
\end{equation}
Using the above together with the fact that $\rho_2^{(N)} \geq 1$, it follows that
\begin{align}
    &\rho_1^{(N)} \rho_2^{(N)} \rho_3^{(N)} \sqrt{{\log (\max \{ M_N, N \}) } } 
    \geq \rho_3^{(N)} \max \{ \frac{1}{\sqrt{M_N}}, \frac{1}{\sqrt{N}} \} \sqrt{{\log (\max \{ M_N, N \}) } }  \nonumber \\
    &= \frac{\rho_3^{(N)}}{\sqrt{M_N N}} \max \{\sqrt{M_N},\sqrt{N} \}  \sqrt{ \log (\max \{ M_N, N \}) } \nonumber \\
    &\geq \frac{\rho_3^{(N)}}{2 \sqrt{M_N N}} ( \sqrt{M_N} + \sqrt{N} ) \sqrt{\log N} \geq \frac{\rho_3^{(N)}}{4 \sqrt{M_N N}} ( \sqrt{M_N} + \sqrt{N} + \sqrt{\log N} ),
\end{align}
for all sufficiently large indices $N$. Applying the above inequality to~\eqref{eq:eq:first summand bound} we obtain
\begin{equation}
    \Vert D({\mathbf{x}}^{(N)} ) (\widetilde{A}^{(N)} - A^{(N) } ) D({\mathbf{y}}^{(N)})\Vert_2 = \mathcal{O}_{\text{w.h.p}}\left( \rho_1^{(N)} \rho_2^{(N)} \rho_3^{(N)} \sqrt{{\log (\max \{ M_N, N \}) } } \right). \label{eq:first summand bound derivation 2}
\end{equation}

We first consider indices $N$ for which $ \rho_1^{(N)} \rho_2^{(N)} \sqrt{{ \log (\max \{ M_N, N \}) } } \leq 1$. In this case, by~\eqref{eq:first summand bound derivation 4} we have
\begin{equation}
    \Vert D(\eta^{(N)}) \widetilde{A} D(\xi^{(N)}) \Vert_2 
    = \mathcal{O}_{\text{w.h.p}}\left( \rho_1^{(N)} \rho_2^{(N)} \rho_3^{(N)} \sqrt{{ \log (\max \{ M_N, N \}) } } \right), \label{eq:fourth summand bound}
\end{equation}
and plugging~\eqref{eq:first summand bound derivation 2},~\eqref{eq:second summand bound derivation},~\eqref{eq:third summand bound derivation}, and~\eqref{eq:fourth summand bound} into~\eqref{eq:scaled matrix operator norm bound in proof} gives that
\begin{equation}
    \left\Vert D(\widetilde{\mathbf{x}}^{(N)} )\widetilde{A}^{(N)} D(\widetilde{\mathbf{y}}^{(N)} ) - D({\mathbf{x}}^{(N)} ) A^{(N) } D({\mathbf{y}}^{(N)} ) \right\Vert_2 = \mathcal{O}_{\text{w.h.p}}\left( \rho_1^{(N)} \rho_2^{(N)} \rho_3^{(N)} \sqrt{{ \log (\max \{ M_N, N \}) } } \right). \label{eq:spectral error small upper bound}
\end{equation}

Next, we consider indices $N$ for which $ \rho_1^{(N)} \rho_2^{(N)} \sqrt{{ \log (\max \{ M_N, N \}) } } > 1$. In this case, we write
\begin{multline}
    \left\Vert D(\widetilde{\mathbf{x}}^{(N)} )\widetilde{A}^{(N)} D(\widetilde{\mathbf{y}}^{(N)} ) - D({\mathbf{x}}^{(N)} ) A^{(N) } D({\mathbf{y}}^{(N)} ) \right\Vert_2 \\
    \leq \left\Vert D(\widetilde{\mathbf{x}}^{(N)} )\widetilde{A}^{(N)} D(\widetilde{\mathbf{y}}^{(N)} ) \right\Vert_F  +  \left\Vert D({\mathbf{x}}^{(N)} ) A^{(N) } D({\mathbf{y}}^{(N)} ) \right\Vert_F. \label{eq:spectral error large bound}
\end{multline}
By applying Lemma~\ref{lem:boundedness of scaling factors} to $A^{(N)}$ and $\widetilde{A}^{(N)}$, we have that
\begin{equation}
    \widetilde{x}_i^{(N)} \widetilde{A}^{(N)} \widetilde{y}_j^{(N)} \leq \left( \frac{b}{a} \right)^2 \frac{ r_i^{(N)} c_j^{(N)} }{ s^{(N)} }, \qquad \text{and} \qquad {x}_i^{(N)} {A}^{(N)} {y}_j^{(N)} \leq \left( \frac{b}{a} \right)^2 \frac{ r_i^{(N)} c_j^{(N)} }{ s^{(N)} }, \label{eq:scaled matrices upper bound}
\end{equation}
for all $i\in[M_N]$ and $j\in[N]$. Therefore, Combining~\eqref{eq:scaled matrices upper bound} with~\eqref{eq:spectral error large bound} implies
\begin{align}
    &\left\Vert D(\widetilde{\mathbf{x}}^{(N)} )\widetilde{A}^{(N)} D(\widetilde{\mathbf{y}}^{(N)} ) - D({\mathbf{x}}^{(N)} ) A^{(N) } D({\mathbf{y}}^{(N)} ) \right\Vert_2
    \leq 2 \left( \frac{b}{a} \right)^2 \frac{\Vert \mathbf{r}^{(N)} \Vert_2 \cdot \Vert \mathbf{c}^{(N)} \Vert_2 }{s^{(N)}} \nonumber \\ 
    &\leq 2 \left( \frac{b}{a} \right)^2 \frac{\sqrt{M_N} \max_i r_i^{(N)} \cdot \sqrt{N} \max_j c_j^{(N)} }{s^{(N)}} 
    = 2 \left( \frac{b}{a} \right)^2 \rho_3^{(N)} \nonumber \\
    &< 2 \left( \frac{b}{a} \right)^2 \rho_1^{(N)} \rho_2^{(N)} \rho_3^{(N)} \sqrt{{ \log (\max \{ M_N, N \}) } } \nonumber \\ 
    &= \mathcal{O}_{\text{w.h.p}} \left(\rho_1^{(N)} \rho_2^{(N)} \rho_3^{(N)} \sqrt{{ \log (\max \{ M_N, N \}) } }\right),
    \label{eq:spectral error large bound 2}
\end{align}
where we used the fact that $ \rho_1^{(N)} \rho_2^{(N)} \sqrt{{ \log (\max \{ M_N, N \}) } } > 1$ for the considered indices $N$.

\section{Acknowledgements}
The author would like to thank Thomas Zhang, Yuval Kluger, and Dan Kluger for their useful comments and suggestions. This research was supported by the National Institute of Health, grant numbers R01GM131642 and UM1DA051410.

\bibliographystyle{plain}
\bibliography{mybib}

\end{document}